\documentclass[11pt]{article}
\usepackage{amsmath,amsfonts,amssymb,latexsym,amsthm,dsfont}
\usepackage[a4paper,margin=2.5cm]{geometry}
\usepackage{graphicx}
\usepackage[backref]{hyperref}
\usepackage[utf8]{inputenc}
\usepackage[T1]{fontenc}
\usepackage{lmodern}
\usepackage{enumitem}
\usepackage{tikz}
\usetikzlibrary{arrows,calc,decorations.text}
\usetikzlibrary{external}


\usepackage[english]{babel}


\newtheorem{thm}{Theorem}[section]%
\newtheorem{cor}[thm]{Corollary}%
\newtheorem{prop}[thm]{Proposition}%
\newtheorem{lem}[thm]{Lemma}%

\newtheorem{xpl}[thm]{Example}%
\newtheorem{ass}[thm]{Assumption}%
\newtheorem{rem}[thm]{Remark}%
%

\newcommand{\dP}{\mathbb{P}}
\newcommand{\dR}{\mathbb{R}}



\newcommand{\cC}{\mathcal{C}}


 
\newcommand{\ABS}[1]{{{\left| #1 \right|}}} 
\let\abs\ABS
\newcommand{\BRA}[1]{{{\left\{#1\right\}}}} 
\newcommand{\DP}[1]{{{\left<#1\right>}}} 
\newcommand{\NRM}[1]{{{\left\| #1\right\|}}} 
\newcommand{\PAR}[1]{{{\left(#1\right)}}} 
\newcommand{\SBRA}[1]{{{\left[#1\right]}}} 
\renewcommand{\leq}{\leqslant}
\renewcommand{\geq}{\geqslant}

\newcommand{\ind}{\mathds{1}}

\newcommand{\torus}{\mathbb{T}}


\title{On the stability of planar randomly switched systems}

\author{
Michel~\textsc{Benaïm}, 
Stéphane~\textsc{Le Borgne}, 
Florent~\textsc{Malrieu}
and Pierre-Andr\'e~\textsc{Zitt}} %

\date{\today}

\begin{document}

\maketitle

\begin{abstract}
Consider the random process ${(X_t)}_{t\geq 0}$ solution of 
$\dot{X}_t=A_{I_t} X_t$ where ${(I_t)}_{t\geq 0}$ is a Markov process 
on $\{0,1\}$ and $A_0$ and $A_1$ are real Hurwitz matrices on $\dR^2$.  
Assuming that there exists $\lambda\in (0,1)$ such that 
$(1-\lambda)A_0+\lambda A_1$ has a positive eigenvalue, we establish 
that $\|X_t\|$ may converge to 0 or $+\infty$ depending on the the jump rate of 
the process $I$. An application to product of random matrices is studied. 
This paper can be viewed as a probabilistic counterpart of the paper \cite{1} 
by Balde, Boscain and Mason.  
\end{abstract}

{\footnotesize %
\noindent\textbf{Keywords.} 
Ergodicity; Linear Differential Equations; Lyapunov exponent;
Planar switched systems; Piecewise Deterministic Markov Process; Product of random matrices.

\noindent\textbf{AMS-MSC.}  60J75; 60J57; 93E15; 34D23 %
} 

\section{Introduction}

The motivation of the present paper is twofold. Firstly, this work
answers a question by G.~Charlot about the stochastic counterpart 
of the work~\cite{1}. Secondly,  the Piecewise Deterministic Markov 
processes (PDMP) under study may present a surprising blow-up 
when time goes to infinity. 

Let $A_0,A_1\in \dR^{2\times 2}$ be two real matrices which admit
two eigenvalues with negative real parts: $A_0$ and $A_1$ 
are said to be Hurwitz matrices. In \cite{1}, the authors deal with the stability 
problem for the planar linear switching system 
$\dot{x}_t=(1-u_t)A_0x_t+u_tA_1x_t$, where  
$u$: $[0,\infty)\to \BRA{0,1}$ is a measurable function. 
They provide necessary and sufficient conditions on $A_0$ and 
$A_1$ for the system to be asymptotically stable for arbitrary 
switching function~$u$. The main hypothesis that ensures 
the existence of a control $u$ such that the system is not 
asymptotically stable is the following.

\begin{ass}
  \label{ass:lambda}
  There exists~$\lambda\in(0,1)$ such that the matrix 
  $A_\lambda=(1-\lambda) A_0+\lambda A_1$ has two real eigenvalues 
  $ - \lambda_- < 0 < \lambda_+$ with opposite signs. Let us denote by 
  $u_-$, $u_+$ two associated (real, unit) eigenvectors.  
\end{ass}

\begin{rem}
 It is shown in \cite{1} that Assumption~\ref{ass:lambda} is equivalent 
 to the relation 
\begin{equation}\label{eq:cond-BBM}
\mathrm{Tr}(A_0)\mathrm{Tr}(A_1)-\mathrm{Tr}(A_0A_1)
<-2\sqrt{\det(A_0)\det(A_1)}.
\end{equation}
\end{rem}

  Assumption~\ref{ass:lambda} may hold in many different cases as it is 
  illustrated by the two following Examples~\ref{ex:jordan} 
  and~\ref{ex:rotations}. The complete description of the different 
  cases is postponed to Section~\ref{sec:angular}.

\begin{xpl}\label{ex:jordan}
Let us define $A_0$ and $A_1$ by 
\[
A_0=\PAR{
\begin{array}{cc}
-1 & 2b \\
0  & -1 
\end{array}
}
\quad\text{and}\quad
A_1=\PAR{
\begin{array}{cc}
-1 & 0 \\
2b  & -1 
\end{array}
}
\]
with $b>0$. Then $A_0$ and $A_1$ are two Jordan matrices and 
the eigenvalues of $A_{1/2}$ are given by $-1\pm b$.  
\end{xpl}

\begin{xpl}\label{ex:rotations}
Let us define $A_0$ and $A_1$ by 
  \[
A_0=\PAR{
\begin{array}{cc}
-1 & ab \\
-a/b  & -1 
\end{array}
}
\quad\text{and}\quad
A_1=\PAR{
\begin{array}{cc}
-1 & -a/b \\
ab  & -1 
\end{array}
}
\]
with $a,b>0$. Then $A_0$ and $A_1$ have conjugate complex eigenvalues and 
the eigenvalues of $A_{1/2}$ are $-1\pm a (b-1/b)/2$. 
\end{xpl}

In the sequel, we suppose that Assumption~\ref{ass:lambda} holds. Let us define 
$\lambda_0=\lambda$ and $\lambda_1=1-\lambda$. For any $\beta>0$, 
consider the Markov process $(X,I)$ on $\dR^2 \times \{0,1\}$ driven by the 
generator~$\mathcal{L}_\beta$:
\[
  \mathcal{L}_\beta f(x,i) = \mathcal{L}_C f (x,i) + \beta \mathcal{L}_J f(x,i) 
\]
where 
\[
  \mathcal{L}_C f(x,i)=A_i \nabla f(x,i) 
  \quad\text{and}\quad
  \mathcal{L}_J f(x,i)=\lambda_i(f(x,1-i) - f(x,i)).
\]
The operator $\mathcal{L}_C$ corresponds to the ``continuous'' part (the 
first component $x$ evolves along the flow of the vector field $x\mapsto A_ix$) 
and $\beta \mathcal{L}_J$ gives the jumps on the second component. 
If $\nu$ is a probability measure on $\dR^2 \times \{0,1\}$, we denote by
$\dP_\nu$ the law of the process $(X,I)$ when the law of $(X_0,I_0)$ is~$\nu$. 
\begin{rem}
One can easily construct the process $(X,I)$ as follows. The 
process~${(I_t)}_{t\geq 0}$ is the Markov process on $\BRA{0,1}$ with 
jump rates ${(\beta \lambda_i)}_{i\in\BRA{0,1}}$. Then, ${(X_t)}_{t\geq 0}$ is 
the solution of 
\[
X_t=X_0+\int_0^t \! A_{I_s}X_s\,ds, \quad (t\geq 0).
\]
Notice that ${(I_t)}_{t\geq 0}$ is a Markov process with invariant measure 
\[
\frac{\beta\lambda_1}{\beta\lambda_0+\beta\lambda_1}\delta_0
+\frac{\beta\lambda_1}{\beta\lambda_0+\beta\lambda_1}\delta_1
=(1-\lambda)\delta_0+\lambda\delta_1. 
\]
\end{rem}
Our main result ensures that under Assumption~\ref{ass:lambda} the norm of 
the continuous component $X$ goes to zero if the jumps are rare and to $+\infty$ 
if the jumps are sufficiently numerous (and $X_0\neq 0$). 
\begin{thm}
  \label{thm:mainResult}
  Under Assumption~\ref{ass:lambda}, there exists $\chi(\beta)\in\dR$ such that, 
  for any initial measure $\nu$ such that $\nu (\{0\}\times \BRA{0,1}) = 0$,
\begin{equation}
  \label{eq:generic-convergence}
\frac{1}{t}\log \NRM{X_t}\xrightarrow[t\to\infty]{\dP_\nu-a.s.}\chi(\beta).  
\end{equation}
Moreover, there exist two constants $0<\beta_1 \leq  \beta_2< \infty$ such that: 
  \begin{itemize}
    \item if $\beta < \beta_1$, then $\chi(\beta)$ is negative and 
    $\NRM{X_t} \xrightarrow[t\to \infty]{\dP_\nu-a.s.} 0$, 
    \item if $\beta > \beta_2$, then $\chi(\beta)$ is positive and 
    $\NRM{X_t} \xrightarrow[t\to \infty]{\dP_\nu-a.s.} \infty$.
  \end{itemize}
\end{thm}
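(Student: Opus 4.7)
The plan is to reduce the almost-sure convergence to an ergodic theorem for an auxiliary PDMP on a compact state space, and then to analyse the two asymptotic regimes $\beta \to 0$ and $\beta \to \infty$ separately. First, since the ODE is linear, $X_0 \neq 0$ forces $X_t\neq 0$ for all $t\geq 0$, so the angular process $\Theta_t := X_t/\NRM{X_t}$ is well-defined on all of $[0,\infty)$ $\dP_\nu$-a.s. A direct computation gives
\[
\frac{d}{dt}\log\NRM{X_t} = \DP{\Theta_t, A_{I_t}\Theta_t} =: h(\Theta_t, I_t).
\]
The pair $(\Theta, I)$ is itself a PDMP on the compact space $\dS^1 \times \BRA{0,1}$, with continuous part obtained by projecting the field $A_i\theta$ onto $\dS^1$ and the same jump mechanism as $I$. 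Consequently,
\[
\frac{1}{t}\log\NRM{X_t} - \frac{1}{t}\log\NRM{X_0} = \frac{1}{t}\int_0^t h(\Theta_s, I_s)\,ds,
\]
and it suffices to prove that this angular PDMP admits a unique invariant probability $\mu_\beta$: Birkhoff's theorem then yields \eqref{eq:generic-convergence} with $\chi(\beta)=\int h\, d\mu_\beta$.

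Existence of an invariant measure for $(\Theta, I)$ is automatic by compactness. For uniqueness I would use a controllability / Hörmander-type argument: under Assumption~\ref{ass:lambda}, the two projected fields $\theta\mapsto A_i\theta - \DP{\theta, A_i\theta}\theta$ on $\dS^1$ cannot be proportional simultaneously everywhere (this can be read off the angular description of the pair $(A_0,A_1)$ to be given later in the paper), so the deterministic control system is accessible on $\dS^1$. Combined with positivity of both jump rates, this yields a Doeblin-type minorisation of the transition semigroup after finitely many jumps, and hence uniqueness of $\mu_\beta$.

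It remains to analyse $\chi(\beta)=\int h\, d\mu_\beta$ at the two extremes. For $\beta \to 0$, switches are rare; between jumps the flow contracts exponentially at rate $\mu_i := \max\mathrm{Re}\,\mathrm{Sp}(A_i) < 0$ since $A_i$ is Hurwitz, and on any long interstop interval $\Theta_t$ aligns with the least-contracting direction of $A_{I_t}$. A slow-switching argument then gives $\chi(\beta) \to (1-\lambda)\mu_0 + \lambda \mu_1 < 0$, which furnishes $\beta_1$. For $\beta \to \infty$, the chain $I$ equilibrates very fast; a PDMP averaging principle shows that $X$ is well approximated on macroscopic time scales by the deterministic flow of $A_\lambda=(1-\lambda)A_0+\lambda A_1$. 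Since $A_\lambda$ has eigenvalue $\lambda_+>0$ by Assumption~\ref{ass:lambda}, the Lyapunov exponent of this averaged linear flow equals $\lambda_+>0$ (for Lebesgue-a.e.\ initial angle), whence $\chi(\beta)\to \lambda_+>0$ as $\beta\to\infty$ and $\beta_2$ exists.

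The two main difficulties I anticipate are: (i) the uniqueness of $\mu_\beta$, whose proof depends delicately on the spectral type of $A_0$ and $A_1$ and will presumably require a case distinction along the lines of the angular analysis promised later in the paper (Jordan blocks versus complex conjugate eigenvalues as in Examples~\ref{ex:jordan} and~\ref{ex:rotations}); and (ii) making the averaging limit as $\beta\to\infty$ rigorous at the level of the Lyapunov exponent itself, since weak convergence of the processes is not enough — one needs uniform-in-$\beta$ tightness and regularity of the family $\{\mu_\beta\}$ in order to pass the integral of the oscillating function $h$ to the limit.
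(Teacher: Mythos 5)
Your reduction to the angular process and the formula $\chi(\beta)=\int \mathcal{A}\,d\mu_\beta$ is indeed the paper's route for \eqref{eq:generic-convergence}, but your uniqueness claim for $\mu_\beta$ is false in general, and the accessibility argument you propose cannot repair it. As the angular analysis in Section~\ref{sec:angular} shows (cases (e) and (f), which include Example~\ref{ex:jordan}: there $d_0\leq 0$ and $d_1\geq 0$, each vanishing on a doubleton), there can be an arc of the circle that is invariant under \emph{both} projected flows; the process started inside it never leaves, and by the antipodal symmetry there are two such arcs, hence exactly two ergodic invariant measures $\mu_\beta$ and $\tilde\mu_\beta$. No Doeblin minorisation on all of $\dS^1\times\{0,1\}$ is available. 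The theorem survives because $\tilde\mu_\beta$ is the image of $\mu_\beta$ under $\theta\mapsto\theta+\pi$ and $\mathcal{A}(\theta,i)=\mathcal{A}(\theta+\pi,i)$, so the Birkhoff average of $\mathcal{A}$ is the same deterministic number on either recurrent class; this symmetry observation is the missing step you need to make $\chi(\beta)$ well defined.

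The two asymptotic regimes are also left essentially unproved. For large $\beta$ you correctly identify that weak convergence to the averaged flow of $A_\lambda$ does not by itself control $\int\mathcal{A}\,d\mu_\beta$, but you do not supply the mechanism; the paper's solution is a concentration estimate (Lemma~\ref{lem:concentration}) showing $\mu_\beta$ charges any neighbourhood of $\{\theta_+,\tilde\theta_+\}$ with mass $\geq 1-\epsilon$ for $\beta$ large, proved via a perturbed test function $f_\beta=f-\beta^{-1}g$ with $g$ solving a Poisson equation for the jump generator, so that $L_\beta f_\beta=L_Af+O(\beta^{-1})$. For small $\beta$, your slow-switching limit $\chi(\beta)\to(1-\lambda)\mu_0+\lambda\mu_1$ is plausible but unsubstantiated (one must control the alignment time of the angle within each sojourn); the paper instead bypasses the angular process entirely and gives a short, complete argument: choose quadratic norms $V_i$ with $\mathcal{L}_CV_i\leq-\rho V_i$, observe that the jump term is $O(\beta)V$ by equivalence of norms, and conclude that $e^{\rho' t}V(X_t,I_t)$ is a positive supermartingale for $\beta<\beta_1$, forcing $\NRM{X_t}\to 0$ exponentially fast. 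You should either adopt that direct argument or supply the missing estimates in your two limiting regimes.
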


 \begin{rem}
 The process ${((X_t,I_t))}_{t\geq 0}$ is what is called a Piecewise Deterministic 
 Markov Process on $\dR^2\times\BRA{0,1}$ (see \cite{davis,jacobsen} 
 for details) where the continuous part is driven by two vectors 
 fields that admit a unique stable point and are exponentially 
 stable. In \cite{BH} it is proved that if the process is recurrent  its invariant measure is often absolutely 
 continuous. The previous theorem shows that the recurrence may not be so easy to establish (it can depend on the jump rates).
\end{rem}

We prove Theorem~\ref{thm:mainResult} in Section \ref{sec:mainResult}. 
We do not know if $\beta_1=\beta_2$ under Assumption~\ref{ass:lambda}. 
Nevertheless, Section~\ref{sec:example} is dedicated to the study of  
Examples~\ref{ex:jordan} and~\ref{ex:rotations} where this "phase transition" 
can be established. The exponential rate 
of growth of the process is given by an expression analogous to Furstenberg 
formula (\cite{3}). Generally it is difficult to compute the element entering 
the Furstenberg formula (see examples in \cite{2}, \cite{4}). For the example of 
Section~\ref{sec:example} one obtains an explicit expression of the 
"Lyapunov" exponent of ${(X_t)}_{t\geq 0}$. Finally, in 
Section~\ref{sec:matrices}, we remark that our results can be interpreted 
in terms of products of random matrices. We obtain examples of products 
of random independent matrices, all of them contracting, with a positive 
Lyapunov exponent (we are not in the frame of unimodular matrices 
studied in \cite{2}, \cite{4}).

\section{The general case}
\label{sec:mainResult}
The proofs of the two parts of Theorem~\ref{thm:mainResult} use 
different techniques. The easy part, when $\beta$ is small,  follows 
from a martingale argument explained in Section \ref{sec:fewJumps}. 
To study the process for large~$\beta$, we use a polar decomposition, 
detailed in Section~\ref{sec:polar}. The angular process
is studied in Sections~\ref{sec:angular} and \ref{sec:ergodicAngular}. In Section~\ref{sec:blowup} we give the 
main line of the proof of Theorem~\ref{thm:mainResult}; the proof of a key 
lemma is postponed to Section~\ref{sec:concentration}. 

\subsection{Few jumps: convergence to zero}
\label{sec:fewJumps}

 In this subsection, we suppose that $\beta$ is small: the $i$ component 
rarely jumps. The two flows associated to $A_0$ and $A_1$ being linear 
and attractive, there exists $\rho>0$ and two norms $V_0$ and $V_1$, 
given by two positive symmetric matrices $M_0$ and $M_1$, such that, for 
$V_i(x) = \langle x, M_i x\rangle$, 
\[ 
\mathcal{L}_CV_i(x,i) \leq -\rho V_i(x). 
\]
Define, $V(x,i) = V_i(x)$. Since 
$\ABS{\mathcal{L}_J f(x,i)}\leq K(\ABS{f(x,0)} + \ABS{f(x,1)})$, we get  
\begin{align*}
\mathcal{L}_\beta V(x,i) 
&= \mathcal{L}_CV_i(x,i) + \beta \mathcal{L}_J V_i(x,i) \\
&\leq -\rho V_i(x) + \beta K (V_0(x) + V_1(x)) \\
&\leq -\rho V_i(x) + \beta K' V_i(x)
\end{align*}
by the equivalence of the norms. Therefore  there exist a $\rho'>0$ 
and a $\beta_1>0$ such that, for $\beta< \beta_1$, 
\[
\forall (x,i)\in\dR^2\times\BRA{0,1}, \quad 
\mathcal{L}_\beta V(x,i)  \leq - \rho'V(x,i). 
\]
Consequently the process ${(M_t)}_{t\geq 0}$ defined by 
$M_t = e^{\rho't} V(X_t,I_t)$ is a positive supermartingale. It converges 
almost surely to a random variable which is almost surely finite. 
Therefore $V(X_t,I_t)$ converges almost surely to zero, and $\NRM{X_t}$ 
itself converges to zero almost surely (exponentially fast).  

\subsection{A polar decomposition}
\label{sec:polar}

We begin by decomposing the deterministic dynamics.
Let $A$ be a matrix on $\dR^2$ and $x\in\dR^2\backslash \BRA{0}$.
Consider ${(x_t)}_{t\geq 0}$ the solution of 
\[
\begin{cases}
 \dot{x}_t=Ax_t,\\
 x_0=x.
\end{cases}
\]

First of all, since $x$ is not $0$, then, for any $t\geq 0$, $x_t$  is not equal to $0$. 
Therefore it is possible to define the polar coordinates $(r_t,\theta_t)$ of 
$x_t$. Call $e_\theta$ the unit vector $(\cos\theta, \sin\theta)$ 
and define $u_t = e_{\theta_t}$: $x_t$ may be written $r_tu_t$. Since 
$r_t^2 = \DP{x_t,x_t}$, we have: 
\begin{align*}
  r_t \dot{r}_t &= \DP{x_t,Ax_t} \\
  A(r_tu_t) &=  \dot{x}_t = \dot{r}_t u_t + r_t \dot{u}_t.
\end{align*}
Therefore:
\begin{align}
  \label{eq:evolR}
   \dot{r}_t &= r_t \DP{u_t,Au_t} \\
   \label{eq:evolU}
   \dot{u}_t &= Au_t  - \DP{u_t,Au_t} u_t. 
\end{align}
The evolution of $u_t$ on the circle is autonomous. The derivative $ \dot{u}_t$ vanishes when $Au_t=\DP{u_t,Au_t} u_t$ that is when $u_t$ is a eigenvector of $A$. As 
a consequence, the equation  (\ref{eq:evolU}) has
\begin{itemize}
\item four stationary points iff $A$ admits two different eigenvalues, 
\item two stationary points iff $A$ is a 
  Jordan matrix as in 
  Example~\ref{ex:jordan},
\item no stationary points iff the eigenvalues of $A$ are not real. 
\end{itemize}
If we write equation (\ref{eq:evolU}) in 
terms of the angles $\theta_t$. Since $\dot{u}_t = \dot{\theta_t} e_{\theta_t + \pi/2}$, 
the scalar product of \eqref{eq:evolU} with
$e_{\theta_t+ \pi/2}$ gives:
\begin{align}
  \dot{\theta}_t &= \DP{ Ae_{\theta_t}, e_{\theta_t + \pi/2}} \notag\\
  &= (A_{22} - A_{11} )\sin(\theta_t)\cos(\theta_t) 
  + A_{21}\cos^2(\theta_t) - A_{12}\sin^2(\theta_t).  
  \label{eq:evolTheta}
\end{align}
The critical points of this differential equation are related to the eigenvector of $A$
as it is pointed out in the following lemma. 
\begin{lem}
   \label{lem:classif}
For any matrix $A$, the function 
\[
d:\ \theta\mapsto d(\theta)= \DP{ Ae_{\theta}, e_{\theta + \pi/2}}
\]
given by \eqref{eq:evolTheta} is $\pi$-periodic and $d(\theta)=0$ iff 
$e_\theta$ is an eigenvector of $A$. Finally, the function $d$ is constant and equal to zero 
iff $A=\lambda\mathrm{I}_2$. 
\end{lem}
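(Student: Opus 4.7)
The plan is to address the three claims in turn, each following from elementary two-dimensional linear algebra once one exploits the fact that $(e_\theta,e_{\theta+\pi/2})$ is an orthonormal basis of $\dR^2$.

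For $\pi$-periodicity I would simply note that $e_{\theta+\pi}=-e_\theta$ and $e_{\theta+3\pi/2}=-e_{\theta+\pi/2}$, so a direct substitution into the definition yields
\[
d(\theta+\pi) = \DP{A(-e_\theta),-e_{\theta+\pi/2}} = \DP{Ae_\theta,e_{\theta+\pi/2}} = d(\theta).
\]

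For the characterization of zeros, since $(e_\theta,e_{\theta+\pi/2})$ is an orthonormal basis, $d(\theta)$ is precisely the component of $Ae_\theta$ along $e_{\theta+\pi/2}$. Hence $d(\theta)=0$ is equivalent to $Ae_\theta$ being colinear with $e_\theta$, that is, to $e_\theta$ being an eigenvector of $A$.

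For the final statement, the reverse direction is trivial: if $A=\lambda \mathrm{I}_2$ then $Ae_\theta=\lambda e_\theta \perp e_{\theta+\pi/2}$ for every $\theta$. Conversely, assume $d\equiv 0$. By the previous point every $e_\theta$ is then an eigenvector of $A$, so there exists $\mu:\dR\to\dR$ with $Ae_\theta=\mu(\theta)e_\theta$. Choose $\theta_1$ and $\theta_2$ such that $e_{\theta_1}$ and $e_{\theta_2}$ are linearly independent, and consider a third direction $e_{\theta_3}$ not colinear with either; expanding $e_{\theta_3}=\alpha e_{\theta_1}+\beta e_{\theta_2}$ with $\alpha,\beta\neq 0$ and using the linearity of $A$, the identity $Ae_{\theta_3}=\mu(\theta_3)e_{\theta_3}$ forces $\mu(\theta_1)=\mu(\theta_2)=\mu(\theta_3)$. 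Thus $\mu$ takes a single value $\lambda$, and $A$ agrees with $\lambda\mathrm{I}_2$ on a basis, so $A=\lambda\mathrm{I}_2$.

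The argument is purely mechanical and I do not anticipate a genuine obstacle; the only point needing a moment of care is the last equivalence, where one must rule out the possibility of two distinct eigenvalues when every direction is already an eigendirection.
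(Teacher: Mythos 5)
Your proof is correct and follows essentially the same route as the paper: periodicity from the sign cancellation of $e_{\theta+\pi}=-e_\theta$ and $e_{\theta+3\pi/2}=-e_{\theta+\pi/2}$, and the zero characterization from reading $d(\theta)$ as the component of $Ae_\theta$ orthogonal to $e_\theta$. You additionally write out the standard linear-algebra argument for the final equivalence $d\equiv 0 \Leftrightarrow A=\lambda\mathrm{I}_2$, which the paper's proof leaves implicit; that part is also correct.
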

\begin{proof}
If $\theta$ is changed to $\theta+\pi$ then both $e_{\theta}$ and $e_{\theta+\pi/2}$ are changed to their opposite, so that  $\DP{ Ae_{\theta}, e_{\theta + \pi/2}}$ remains unchanged. We have already seen that $d(\theta)=0$ if and only if $e_\theta$ is an eignevector of $A$. 
\end{proof}

\subsection{The angular process}
\label{sec:angular}
Let us use the polar decomposition to study the 
process~${((X_t,I_t))}_{t\geq 0}$. Between jumps, 
the process follows the deterministic dynamics described above, 
with $A \in\BRA{A_0,A_1}$. Since the evolution of the angle $\theta$ 
is autonomous for each dynamics, the process $(\Theta,I)$ 
is a Markov process on $\dR\times \BRA{0,1}$. The evolution 
of ${(R_t)}_{t\geq 0}$ is determined by the one of the 
process~${((\Theta_t,I_t))}_{t\geq 0}$, by solving 
Equation~\eqref{eq:evolR} between the jumps. If we call 
$\mathcal{A}(\theta,i) = \DP{A_i e_\theta, e_\theta}$, then
\begin{equation}
  \label{eq:rEnFonctionDeTheta}
  R_t = R_0 \exp\left( \int_0^t \mathcal{A}(\Theta_s,I_s) ds \right). 
\end{equation}
and $R_t$ appears as a multiplicative functional of 
${((\Theta_s,I_s))}_{0\leq s\leq t}$. 

The proof of Theorem~\ref{thm:mainResult} relies on the study of the 
long time behavior of $(\Theta,I)$. We will see in the sequel that this process 
may be ergodic (\emph{i.e.} it may admits a unique invariant measure) or 
not. Let us define, for $i\in\{0,1\}$ and $\lambda\in(0,1)$, 
\begin{align*}
  d_i(\theta)       &= \DP{A_i e_\theta, e_{\theta + \pi/2}}, \\
  d_\lambda(\theta) &= (1- \lambda) d_0(\theta) + \lambda d_1(\theta). 
\end{align*}
The generator of the Markov process $(\Theta,I)$  is given by:
\[
L_\beta f(\theta,i)  = L_C f (\theta,i) + \beta L_Jf(\theta,i) 
\]   
where
\begin{equation}
 \label{eq:def-LC-LJ}
L_Cf(\theta,i) = d_i(\theta) \partial_\theta f(\theta,i)
\quad\text{and}\quad
L_Jf(\theta,i) = \lambda_i (f(\theta,1-i) - f(\theta,i)).
\end{equation}
Once again, $L_C$ is the continuous drift and $\beta L_J$ is the jump part. 
Let us also introduce the averaged (deterministic) dynamic:
\[
L_A f(\theta,i) = d_\lambda (\theta) \partial_\theta f(\theta,i).
\]
Under Assumption~\ref{ass:lambda}, Lemma~\ref{lem:classif} ensures 
that the vector field~$F^\lambda=d_\lambda \partial_\theta$ has exactly 
four critical points on $[0,2\pi)$. As $d_\lambda$ is $\pi$-periodic it suffices to describe it only on an interval of length $\pi$ separating two zeros of $d_\lambda$ corresponding to the negative eigenvalues of $A_\lambda$. Let $[\theta_-,\theta_-+\pi)$ this interval. The function $d_\lambda$ vanishes only once on $(\theta_-,\theta_-+\pi)$ at a point $\theta_+$ correponding to the positive eigenvalues of $A_\lambda$. We have 
\begin{equation}
 \label{eq:signe-dlambda}
d_\lambda(\theta)
\begin{cases}
 >0 &\text{if } \theta\in(\theta_-,\theta_+),\\
 <0 &\text{if } \theta\in(\theta_+,\theta_-+\pi).
\end{cases}
\end{equation}
Let us firstly notice that, under Assumption~\ref{ass:lambda}, the critical 
points $d_0$, $d_1$ and $d_\lambda$ are different.
\begin{lem}\label{lem:critic}
Under Assumption~\ref{ass:lambda} if $\theta$ is a critical point of $d_\lambda$
then $d_0(\theta)d_1(\theta)<0$. In particular, $\theta$ is not a critical point 
of $d_i$, $i\in\BRA{0,1}$.
\end{lem}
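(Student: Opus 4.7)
The plan is to exploit the linearity of the map $A \mapsto \langle A e_\theta, e_{\theta+\pi/2}\rangle$, which immediately gives $d_\lambda = (1-\lambda) d_0 + \lambda d_1$. So the hypothesis $d_\lambda(\theta)=0$ reads $(1-\lambda) d_0(\theta) + \lambda d_1(\theta) = 0$ with $\lambda\in(0,1)$. From this linear relation alone, either $d_0(\theta)$ and $d_1(\theta)$ have strictly opposite signs (in which case we are done), or both vanish. The only real work is to rule out the second case.

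So the next step is to assume for contradiction that $d_0(\theta) = d_1(\theta) = 0$. By Lemma~\ref{lem:classif}, this means $e_\theta$ is simultaneously a (real) eigenvector of $A_0$ and of $A_1$. I will then write both matrices in the basis $(e_\theta, e_{\theta+\pi/2})$, in which they become upper triangular with real entries. Their eigenvalues are then the two real diagonal entries.

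Now comes the key observation, which uses Assumption~\ref{ass:lambda} via the Hurwitz hypothesis on $A_0$ and $A_1$. Since $A_i$ is Hurwitz, all its eigenvalues have strictly negative real parts; but its eigenvalues are now real (they are the diagonal entries of a real upper triangular matrix), so they are strictly negative. Taking the convex combination, $A_\lambda$ is also upper triangular in this basis and its two diagonal entries are convex combinations of negative reals, hence strictly negative. So both eigenvalues of $A_\lambda$ are negative, contradicting Assumption~\ref{ass:lambda} (which forces a positive eigenvalue $\lambda_+$). This rules out the common eigenvector case and finishes the proof.

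The main (and only) obstacle is this last contradiction; the rest is essentially tautological. The "in particular" statement is then immediate: if $\theta$ were a critical point of some $d_i$, then $d_i(\theta)=0$, so $d_0(\theta) d_1(\theta) = 0$, contradicting the strict inequality just established.
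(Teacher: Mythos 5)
Your proof is correct and follows essentially the same route as the paper: reduce to the case of a common real eigenvector $e_\theta$ of $A_0$ and $A_1$, deduce that both eigenvalues of $A_\lambda$ are convex combinations of negative reals, and contradict the existence of the positive eigenvalue $\lambda_+$. The only cosmetic difference is that you obtain the second eigenvalue by triangularizing in the basis $(e_\theta,e_{\theta+\pi/2})$ where the paper uses the trace identity $\mathrm{Tr}(A_\lambda)=(1-\lambda)\mathrm{Tr}(A_0)+\lambda\mathrm{Tr}(A_1)$.
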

\begin{proof}
Assume that there exists $\theta$ such that 
$d_\lambda(\theta)=0=d_0(\lambda)$. Then $d_1(\theta)=0$. As a consequence, 
$u_\theta$ is an eigenvector for $A_0$, $A_1$ and $A_\lambda$ associated 
to the respective eigenvalues $\eta_0$, $\eta_1$ and $\eta_\lambda$. 
By definition, $\eta_\lambda=(1-\lambda)\eta_0+\lambda \eta_1$. This implies 
that the second eigenvalue of $A_\lambda$ is also a convex combination 
of two complex numbers with negative real part (consider the relation 
$\mathrm{Tr}(A_\lambda)
=(1-\lambda)\mathrm{Tr}(A_0)+\lambda \mathrm{Tr}(A_0)$). This cannot 
hold under Assumption~\ref{ass:lambda}. As a consequence, 
$d_0(\theta)d_1(\theta)\neq 0$. Since $d_\lambda(\theta)=0$, we get 
that $d_0(\theta)$ and $d_1(\theta)$ have opposite signs. 
\end{proof}

Without loss of generality we can assume that $d_0(\theta_+)<0$ and $d_1(\theta_+)>0$.
Because of the equality $d_\lambda=(1- \lambda) d_0(\theta) + \lambda d_1(\theta)$ we have constraints on the signs of the $d_i$.
Let us list all the possibilities:
\begin{enumerate}[label=(\alph*)]
  \item $d_1$ does not vanish and $d_0$ vanishes 0, 1 or 2 times on $(\theta_-,\theta_+)$,
  \item $d_0$ does not vanish and $d_1$ vanishes 0, 1 or 2 times on $(\theta_+,\theta_-+\pi)$,
  \item $d_1$ vanishes 2 times on  $(\theta_+,\theta_-+\pi)$ at points $\theta_{1m}<\theta_{1M}$ and $d_0$ vanishes 1 or 2 times on $(\theta_{1m},\theta_{1M})$,
  \item $d_0$ vanishes 2 times on  $(\theta_-,\theta_+)$ at points $\theta_{0m}<\theta_{0M}$ and $d_1$ vanishes 1 or 2 times on $(\theta_{0m},\theta_{0M})$,
  \item $d_1$ vanishes 1 or 2 times on $(\theta_+,\theta_-+\pi)$ at points $\theta_{1m}\leq \theta_{1M}$ and $d_0$ vanishes 1 or 2 times on  $(\theta_-,\theta_+)$ at points $\theta_{0m}\leq\theta_{0M}$,
  \item $d_0$  vanishes  2 times at points $\theta_{0m}<\theta_{0M}$ and $d_1$  vanishes  2 times at points $\theta_{1m}<\theta_{1M}$ such that $\theta_{1m}<\theta_{0m}<\theta_+<\theta_{1M}<\theta_{0M}$.
\end{enumerate}

In the last two cases we have a subinterval of $(\theta_-,\theta_-+\pi)$ that is invariant for both of the systems $\dot{\theta_t}=d_i(\theta_t)$ : $(\theta_{0M},\theta_{1m})$ in case (e), $(\theta_{0m},\theta_{1M})$ in case (f) (see Figure~\ref{fig=cases}).

\begin{figure}
{
\def\tzerom{30}%
\def\tzeroM{50}%
\def\tunm{120}%
\def\tunM{155}%
\def\tm{0}%
\def\tp{94}%
\def\tzerodemi{40}
\def\tundemi{137.5}

\noindent\begin{tikzpicture}[%
  scale=2.4,
  mean/.style={blue!50!red},
  zero/.style={blue!80!black},
  one/.style={red!80!black},
  angle/.style={fill=white,fill opacity = 0.7,text opacity = 1,inner sep=1pt,font={\small}},
  flow/.style={)->,very thick,shorten <= 1pt, shorten >= 1pt},
  alongflow/.style={flow,thin},
  ]
  \draw[very thin] (0,0) circle (1cm);
  \begin{scope}[zero]
    \draw (\tzerom + 180:1.3) -- (\tzerom:1.5);
    \draw (\tzeroM + 180:1.3) -- (\tzeroM:1.5);
    \draw[flow] (\tzerom:1.15)  arc[radius=1.15cm,start angle=\tzerom,end angle=\tzeroM];
    \draw[flow] (\tzerom:1.15)  arc[radius=1.15cm,start angle=\tzerom,end angle=\tzerom - 40];
    \draw[flow] (\tzerom + 180 :1.15)  arc[radius=1.15cm,start angle=\tzerom+180,end angle=\tzeroM ];
    \draw[flow,->]  (\tzerodemi:1.15) -- ++(\tzerodemi + 90:1pt);
    \draw[flow,->]  (\tundemi:1.15)   -- ++(\tundemi - 90:1pt); 
    \node[angle,right] at (\tzerom:1.55) {$\theta_{0m}$};
    \node[angle,right] at (\tzeroM:1.55) {$\theta_{0M}$};
  \end{scope}
  \begin{scope}[one]
    \draw (\tunm + 180:1.25) -- (\tunm:1.5);
    \draw (\tunM + 180:1.3) -- (\tunM:1.5);
    \draw[flow,->] (-10:1.3)  arc[radius=1.3cm,start angle=-10,end angle=\tunm];
    \draw[flow] (\tunM:1.3)  arc[radius=1.3cm,start angle=\tunM,end angle=\tunm];
    \draw[flow] (\tunM:1.3)  arc[radius=1.3cm,start angle=\tunM,end angle=\tunM + 40];
    \draw[flow,->]  (\tzerodemi:1.3) -- ++(\tzerodemi + 90:1pt);
    \draw[flow,->]  (\tundemi:1.3)   -- ++(\tundemi - 90:1pt); 
    \node[angle,left] at (\tunm:1.55) {$\theta_{1m}$};
    \node[angle] at (\tunM:1.55) {$\theta_{1M}$};
  \end{scope}
  \begin{scope}[mean]
    \draw (\tm + 180:1.5) -- (\tm:1.5);
    \draw (\tp + 180:1.2) -- (\tp:1.45);
    \draw[flow] (\tm:0.85)  arc[radius=0.85cm,start angle=\tm ,end angle=\tp];
    \draw[flow] (\tm+180:0.85)  arc[radius=0.85cm,start angle=\tm+180,end angle=\tp];
    \draw[flow,->]  (\tzerodemi:0.85) -- ++(\tzerodemi + 90:1pt);
    \draw[flow,->]  (\tundemi:0.85)   -- ++(\tundemi - 90:1pt); 
    \node[angle] at (\tm:0.7) {$\theta_{-}$};
    \node[angle,right] at (\tm + 180:0.7) {$\theta_{-}\!+\!\pi$};
    \node[angle] at (\tp:0.7) {$\theta_{+}$};
  \end{scope}
  \draw[line width=2pt,blue!50!red,
   arrows=triangle 90 cap reversed-triangle 90 cap reversed%
   ] (\tzeroM:1.4) arc [radius = 1.4cm,start angle=\tzeroM,end angle=\tunm];
   \draw [decorate, decoration={%
            text along path,
	    text={invariant by both flows},
	    text color=blue!50!red,
	    text align=center}%
	 ]
     (\tunm:1.5) arc [radius = 1.5cm,start angle=\tunm,end angle = \tzeroM];
   \draw (1.1,-0.9) node[rectangle,draw,fill=white,fill opacity=0.8,text opacity=1] {Case (e)};
  
\end{tikzpicture}
}
{
\def\tzerom{50}%
\def\tzeroM{155}%
\def\tunm{30}%
\def\tunM{120}%
\def\tm{0}%
\def\tp{94}%
\def\tzerodemi{40}
\def\tundemi{137.5}
\begin{tikzpicture}[%
  scale=2.4,
  mean/.style={blue!50!red},
  zero/.style={blue!80!black},
  one/.style={red!80!black},
  angle/.style={fill=white,fill opacity = 0.7,text opacity = 1,inner sep=1pt,font={\small}},
  flow/.style={)->,very thick,shorten <= 1pt, shorten >= 1pt},
  alongflow/.style={flow,thin},
  ]
  \draw[very thin] (0,0) circle (1cm);
  \begin{scope}[zero]
    \draw (\tzerom + 180:1.3) -- (\tzerom:1.5);
    \draw (\tzeroM + 180:1.3) -- (\tzeroM:1.5);
    \draw[flow] (\tzeroM:1.15)  arc[radius=1.15cm,start angle=\tzeroM,end angle=\tzerom];
    \draw[flow,->] (-10:1.15)  arc[radius=1.15cm,start angle=-10,end angle=\tzerom ];
    \draw[flow] (\tzeroM :1.15)  arc[radius=1.15cm,start angle=\tzeroM,end angle=190 ];
    \draw[flow,->]  (\tzerodemi:1.15) -- ++(\tzerodemi + 90:1pt);
    \draw[flow,->]  (\tundemi:1.15)   -- ++(\tundemi - 90:1pt); 
    \node[angle,right] at (\tzerom:1.55) {$\theta_{0m}$};
    \node[angle,left] at (\tzeroM:1.55) {$\theta_{0M}$};
  \end{scope}
  \begin{scope}[one]
    \draw (\tunm + 180:1.25) -- (\tunm:1.5);
    \draw (\tunM + 180:1.3) -- (\tunM:1.5);
    \draw[flow,->] (190:1.3)  arc[radius=1.3cm,start angle=190,end angle=\tunM];
    \draw[flow] (\tunm:1.3)  arc[radius=1.3cm,start angle=\tunm,end angle=\tunM];
    \draw[flow] (\tunm:1.3)  arc[radius=1.3cm,start angle=\tunm,end angle= -10];
    \draw[flow,->]  (\tzerodemi:1.3) -- ++(\tzerodemi + 90:1pt);
    \draw[flow,->]  (\tundemi:1.3)   -- ++(\tundemi - 90:1pt); 
    \node[angle,right] at (\tunm:1.55) {$\theta_{1m}$};
    \node[angle,left] at (\tunM:1.55) {$\theta_{1M}$};
  \end{scope}
  \begin{scope}[mean]
    \draw (\tm + 180:1.5) -- (\tm:1.5);
    \draw (\tp + 180:1.2) -- (\tp:1.45);
    \draw[flow] (\tm:0.85)  arc[radius=0.85cm,start angle=\tm ,end angle=\tp];
    \draw[flow] (\tm+180:0.85)  arc[radius=0.85cm,start angle=\tm+180,end angle=\tp];
    \draw[flow,->]  (\tzerodemi:0.85) -- ++(\tzerodemi + 90:1pt);
    \draw[flow,->]  (\tundemi:0.85)   -- ++(\tundemi - 90:1pt); 
    \node[angle] at (\tm:0.7) {$\theta_{-}$};
    \node[angle,right] at (\tm + 180:0.7) {$\theta_{-}\!+\!\pi$};
    \node[angle] at (\tp:0.7) {$\theta_{+}$};
  \end{scope}
  \draw[line width=2pt,blue!50!red,
   arrows=triangle 90 cap reversed-triangle 90 cap reversed%
   ] (\tunM:1.4) arc [radius = 1.4cm,start angle=\tunM,end angle=\tzerom];
   \draw [decorate, decoration={%
            text along path,
	    text={invariant by both flows},
	    text color=blue!50!red,
	    text align=center}%
	 ]
     (\tunM:1.5) arc [radius = 1.5cm,start angle=\tunM,end angle = \tzerom];
   \draw (1.1,-0.9) node[rectangle,draw,fill=white,fill opacity=0.8,text opacity=1] {Case (f)};
\end{tikzpicture}
}

{\small
The outer arrows, in red, represent the flow of $d_1$. 
The middle ones, in blue, represent $d_0$ and the inner ones the averaged flow $d_\lambda$. 
In the two cases, there is a region around $\theta_+$ that is left invariant by both flows.
The regions on each side are unstable and lead back to the invariant region. }

\caption{%
  \label{fig=cases}%
The three flows in cases (e) and (f).} 
\end{figure}

\subsection{Ergodic properties of the angular process}
\label{sec:ergodicAngular}

Since the asymptotic behavior of $R_t=\NRM{X_t}$ depends on the long 
time behavior of the process $(U,I)=(e_\Theta,I)$, let us briefly study its 
ergodicity (recurrent and transient points, number of invariant measures...).

Firstly, remark  that when Assumption~\ref{ass:lambda} is satisfied there 
exists $\varepsilon>0$ such that 
\begin{itemize}
\item the points 
$\BRA{(\theta,i)\,:\, \theta\in (\theta_--\varepsilon,\theta_-+\varepsilon,),
\ i=0,1}$
lead with positive probability to $(\theta_+,j)$ and $(\theta_+-\pi,j)$, $j=0,1$,
\item the points 
$\BRA{(\theta,i)\,:\, \theta\in ( \theta_-+\pi-\varepsilon,\theta_-+\pi+\varepsilon,),
\ i=0,1}$
lead with positive probability to $( \theta_+,j)$ and $(\theta_++\pi,j)$, $j=0,1$.
\end{itemize}
Thus if one of the sets $(\theta_--\varepsilon,\theta_-+\varepsilon)\times\{0,1\}$
or $( \theta_-+\pi-\varepsilon,\theta_-+\pi+\varepsilon)\times\{0,1\}$
  is attained with positive probability starting from $(\theta_+,0)$, then the Markov process $(U_t, I_t)$ on the circle is recurrent. This is the case in the situations (a), (b), (c), (d) described above. In these situations the process  $(U_t, I_t)$ 
 is irreducible and has a unique invariant measure.
 
In the cases (e) and (f), $(U_t, I_t)$ has exactly two distinct recurrent classes and two invariant measures supported by two intervals on the circles corresponding to the invariant interval defined above and its symmetric. Let $\mu_\beta$ and 
$\tilde \mu_\beta$ be these two ergodic invariant measures. 
For any 
initial measure $\mu$ on $\torus\times\BRA{0,1}$, 
  \[
    \frac{1}{t} \int_0^t f(U_s,I_s) ds \xrightarrow[t\to \infty]{\dP_\mu\, a.s.} 
P \int f(u,i) d\mu_\beta(u,i)+(1-P)\int f(u,i) d\tilde\mu_\beta(u,i)
 \]
where $P\in \BRA{0,1}$ is a random variable such that $\dP(P=1)$ is the 
probability that $(U,I)$ reaches the class of $(e_{\theta_+},0)$ when the 
law of $(U_0,I_0)$ is $\mu$. Now by symmetry we have 
\[
\int f(u,i) d\tilde\mu_\beta(u,i)=\int f(-u,i) d\mu_\beta(u,i),
\]
so that, if $f(-u,i)=f(u,i)$, in all the cases, we have  
  \[
    \frac{1}{t} \int_0^t f(U_s,I_s) ds \xrightarrow[t\to \infty]{\dP_\mu\, a.s.} 
    \int f(u,i) d\mu_\beta(u,i).
 \]

Finally notice that  the invariant measures are always absolutely continuous with respect 
to $\lambda_\torus\otimes (\delta_0+\delta_1)$ where $\lambda_\torus$ 
is the Lebesgue measure on $\torus$.

\subsection{Many jumps: blow up}
\label{sec:blowup}

In the sequel, $\mu_\beta$ stands for any invariant measure of $(U,I)$ and we identify $u=e_\theta$ with $\theta$.
As  $\mathcal{A}(\theta,i)=\DP{A_i e_\theta, e_\theta}=\mathcal{A}(\theta+\pi,i)$  we get (see the expression~\eqref{eq:rEnFonctionDeTheta}): 
\[
  \frac{1}{t} \log\PAR{R_t/R_0} \xrightarrow[t\to \infty]{a.s} 
  \int\!\mathcal{A}(\theta,i)\,d \mu_\beta(\theta,i).
\] 
Thus, for any probability measure $\nu$ on $\dR^2\times \BRA{0,1}$ such that 
$\nu(\BRA{0}\times\BRA{0,1})=0$, the 
convergence~\eqref{eq:generic-convergence} in 
Theorem~\ref{thm:mainResult} holds with 
\[
\chi(\beta)=  \int\!\mathcal{A}(\theta,i)\,d \mu_\beta(\theta,i).
\] 
In order to prove that $\chi(\beta)$ is positive when $\beta$ is large we use the 
following lemma, which will be proved in Section \ref{sec:concentration}. 
\begin{lem}
  \label{lem:concentration}
  When $\beta$ is large, the invariant measures are concentrated around 
  the stable points $\theta_+$ and $\tilde\theta_+=\theta_++\pi$ of the averaged 
  dynamical system. More precisely, 
  for any $\epsilon>0$, and any neighborhood $K\subset \torus$ of the 
  set $\{\theta_+, \tilde{\theta}_+\}$, there exists a $\beta(K,\epsilon)$ 
  such that, for any $\beta\geq \beta(K,\epsilon)$,
  \[ \mu_\beta\PAR{ K \times \{0,1\} }  \geq 1 - \epsilon.\]
\end{lem}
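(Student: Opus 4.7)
The argument rests on the heuristic that, for large $\beta$, the mode process $I$ equilibrates rapidly to its invariant law $(1-\lambda)\delta_0+\lambda\delta_1$ while the angular variable $\Theta$ effectively follows the averaged flow $\dot\theta=d_\lambda(\theta)$, whose only attractors are $\theta_+$ and $\tilde\theta_+=\theta_++\pi$; any invariant measure of the PDMP should therefore concentrate there.

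To quantify this I would use a \emph{corrected} test function. For a smooth $2\pi$-periodic $V\colon\torus\to\dR$, set
\[
v_\beta(\theta,i)=V(\theta)+\frac{\Ind{i=1}}{\beta}\bigl(d_1(\theta)-d_0(\theta)\bigr)V'(\theta).
\]
The correction is engineered precisely so that the $O(\beta)$ contributions coming from $L_J$ cancel; direct use of~\eqref{eq:def-LC-LJ} yields
\[
L_\beta v_\beta(\theta,i)=d_\lambda(\theta)V'(\theta)+\frac{\Ind{i=1}}{\beta}\,d_1(\theta)\,\partial_\theta\!\bigl[(d_1-d_0)V'\bigr](\theta).
\]
Integrating the invariance identity $\int L_\beta v_\beta\,d\mu_\beta=0$ gives the \emph{averaging estimate} $\int d_\lambda V'\,d\mu_\beta=O(1/\beta)$, with an implicit constant depending only on $V$, $d_0$ and $d_1$.

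Now pick a smooth nonnegative $2\pi$-periodic Lyapunov function $V$ for the averaged flow, which vanishes at the two stable points $\theta_+$ and $\tilde\theta_+$, attains its maximum at the two unstable points $\theta_-$ and $\theta_-+\pi$, and is strictly monotonic on each of the four arcs separating consecutive equilibria. The sign pattern~\eqref{eq:signe-dlambda} of $d_\lambda$, combined with the opposite sign pattern of $V'$ on each arc, yields $-d_\lambda V'\geq 0$ on $\torus$ with equality \emph{only} at $\BRA{\theta_-,\theta_+,\theta_-+\pi,\tilde\theta_+}$. The averaging estimate then forces $\mu_\beta\bigl((\torus\setminus O)\times\BRA{0,1}\bigr)\to 0$ for any open $O$ containing these four points; thus every weak limit of $(\mu_\beta)$ is concentrated on the four critical points of $d_\lambda$.

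The main difficulty is to exclude asymptotic mass at the \emph{unstable} equilibria $\theta_-,\theta_-+\pi$. By Lemma~\ref{lem:critic}, $d_0(\theta_-)$ and $d_1(\theta_-)$ are nonzero with opposite signs, so one can fix $\delta,c>0$ such that $|d_0|,|d_1|\geq c$ on $U_\delta=(\theta_--\delta,\theta_-+\delta)$ with fixed opposite signs; the continuous component then pushes the process away from $\theta_-$ in each mode, while the averaged drift $d_\lambda$ itself linearly repels from $\theta_-$. I would exploit this twofold instability either probabilistically---showing that the expected escape time from $U_\delta$ is only $O(\log\beta)$ whereas the expected sojourn time near the attractor $\theta_+$ is exponentially large in $\beta$, and concluding via the ergodic theorem---or analytically, from the stationary Fokker--Planck system
\[
\partial_\theta\bigl(d_i p_i^\beta\bigr)=\beta\bigl(\lambda_{1-i}p_{1-i}^\beta-\lambda_i p_i^\beta\bigr),\qquad d_0 p_0^\beta+d_1 p_1^\beta=C_\beta
\]
(the second identity obtained by summing the two equations). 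Concentration on the critical set forces $C_\beta\to 0$, and near $\theta_-$ the density is well approximated by the outer profile $p^\beta\sim C_\beta/d_\lambda$ whose mass on $U_\delta$ is only $O(|C_\beta|\log(1/\delta))\to 0$. Together with the previous step this yields the claimed concentration on $\BRA{\theta_+,\tilde\theta_+}$.
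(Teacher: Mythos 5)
Your first-order corrector and the resulting averaging estimate $\int d_\lambda V'\,d\mu_\beta=O(1/\beta)$ are correct (your $v_\beta$ differs from the paper's $f_\beta=f-\beta^{-1}g$ only by a function of $\theta$ alone, so it is the same perturbed-test-function device), and they do show that every weak limit of $\mu_\beta$ is carried by the four critical points of $d_\lambda$. But that is not the lemma: one must rule out asymptotic mass at the \emph{unstable} points $\theta_-,\theta_-+\pi$, and a first-order estimate is structurally blind to the difference between $\theta_+$ and $\theta_-$ since $-d_\lambda V'$ vanishes at both. The step you flag as ``the main difficulty'' is therefore the entire content of the lemma, and neither of your sketches closes it. The analytic one contains an actual error: since $d_\lambda(\theta)\sim d_\lambda'(\theta_-)(\theta-\theta_-)$ near $\theta_-$, the outer profile $C_\beta/d_\lambda$ is \emph{not} integrable across $\theta_-$, so its mass on $U_\delta$ is not $O(|C_\beta|\log(1/\delta))$; the inner layer where that profile breaks down is exactly where the work lies (in the explicit example of Section~\ref{sec:example} the density there carries the extra factor $e^{\beta v}$, with $v$ minimal at $\theta_-$, and proving the analogue in general is not routine). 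The probabilistic one (escape time $O(\log\beta)$ from $\theta_-$ versus exponentially long sojourns near $\theta_+$) is a plausible programme, but it needs a large-deviation lower bound on exit times from the basin of $\theta_+$ plus a renewal/ergodic argument, none of which is supplied.

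For contrast, the paper resolves this at the level of the test function by exploiting the \emph{second-order} residual: writing $L_\beta f_\beta=L_Af+\beta^{-1}Rf$ with $Rf=L_CL_Cf-L_CL_Af$, one gets $Rf(\theta_\pm,i)=d_i(\theta_\pm)^2f''(\theta_\pm)$ at the critical points. Choosing $f$ with a nondegenerate maximum at $\theta_-$ ($f''(\theta_-)=-1$) and an almost flat minimum at $\theta_+$ ($f''(\theta_+)=\epsilon$), and invoking Lemma~\ref{lem:critic} to ensure $d_i(\theta_-)\neq0$, makes the residual $\leq-c_u\beta^{-1}<0$ near the unstable point and only $O(\epsilon\beta^{-1})$ near the stable one; Lemma~\ref{lem:lyapImpliqueConcentration} then yields $\mu_\beta(K\times\BRA{0,1})\geq 1-O(\epsilon)$. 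Your framework can be repaired along these lines: take $V$ with the curvature conditions above, use the symmetric two-mode corrector (your one-sided $\Ind{i=1}$ version gives zero, not negative, drift at $(\theta_-,0)$), and keep, rather than discard, the $\beta^{-1}$ residual when integrating against $\mu_\beta$ --- but at that point you are reproducing the paper's computation.
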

Thanks to this result, we can now prove:
\[
  \int \mathcal{A}(\theta,i) d\mu_\beta(\theta,i) > 0
\]
for $\beta$ large enough. For $\theta = \theta_+$ or $\theta = \tilde{\theta}_+$, 
we know that
\[
 \int \mathcal{A}(\theta_+,i) d\mu_\beta(\theta,i) 
 = \DP{ A_\lambda e_{\theta_+}, e_{\theta_+}} = \lambda_+ >0. 
 \]
 Moreover $\mathcal{A}(\cdot,i)$ is continuous for $i = 0,1$. 
 Choose $K$, a neighborhood of $\theta_+, \tilde{\theta}_+$, 
 such that 
 \[ \forall (\theta,i) \in K\times\BRA{0,1}, \quad
 \mathcal{A}(\theta,i) \geq \frac{2\lambda_+}{3}.
 \]
 Thanks to Lemma~\ref{lem:concentration}, 
 for $\beta$ large enough, 
 \[ \mu_\beta( K \times \{0,1\}) \geq 
 1 - \frac{\lambda_+}{6 \NRM{\mathcal{A}}_\infty}.\]
 Therefore:
 \begin{align*}
 \ABS{\int \!\mathcal{A}(\theta,i) d\mu_\beta(\theta,i)  - \lambda_+}
 &\leq \int\!\!\ABS{\mathcal{A}(\theta,i) - \mathcal{A}(\theta_+,i)} 
 \ind_{\theta \in K} d\mu_\beta
      +\int\!\! \ABS{\mathcal{A}(\theta,i) - \mathcal{A}(\theta_+,i)} 
      \ind_{\theta \notin K} d\mu_\beta \\
      &\leq \frac{\lambda_+}{3} +  2 \NRM{\mathcal{A}}_\infty 
      \mu_\beta (\bar{K} \times \{0,1\}) \\
      &\leq \frac{2\lambda_+}{3}. 
 \end{align*}
 This shows that $\chi(\beta) \geq \frac{\lambda_+}{3} > 0$. 
 Hence $R_t$ converges a.s.\  to infinity; this concludes the proof of 
 Theorem~\ref{thm:mainResult}. 

\subsection{The invariant measures concentrate near the attractive points}
\label{sec:concentration}
This section is devoted to the proof of Lemma~\ref{lem:concentration}. 
The idea is that the averaged system gets back quickly to the stable points,
so most of the mass of the invariant measure $\mu_\beta$ should be 
located near these stable points. To quantify this attraction to the stable 
points, we find a Lyapunov function, in the following sense. 
\begin{lem}
  \label{lem:lyapImpliqueConcentration}
  Suppose that there exists a function
$(\theta,i)\mapsto f_\beta(\theta,i)$ that satisfies: 
\begin{align}
f_\beta(\theta,i) &\geq a>0,\nonumber\\ 
L_\beta f_\beta(\theta,i)&\leq -\rho f_\beta(\theta,i)+C\ind_\BRA{\theta \in K},
\label{eq:lyapunov} 
\end{align}
Then $\mu_\beta(K) \geq a\rho/C$. 
\end{lem}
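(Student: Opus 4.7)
The plan is to integrate the Lyapunov inequality~\eqref{eq:lyapunov} against the invariant measure $\mu_\beta$ and exploit stationarity. If $f_\beta$ lies in the domain of the generator $L_\beta$ (or at least in a class for which the stationarity identity is valid), then invariance of $\mu_\beta$ yields
\[
\int L_\beta f_\beta(\theta,i)\, d\mu_\beta(\theta,i) = 0.
\]
Feeding the pointwise bound~\eqref{eq:lyapunov} into this identity gives
\[
0 \leq -\rho \int f_\beta \, d\mu_\beta + C\, \mu_\beta\PAR{K \times \{0,1\}},
\]
and using the uniform lower bound $f_\beta \geq a$ leads to $a\rho \leq C\, \mu_\beta(K \times \{0,1\})$, which is the claim.

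The only nontrivial point is the justification of the stationarity identity $\int L_\beta f_\beta \, d\mu_\beta = 0$. I would handle it as follows. The state space $\torus \times \{0,1\}$ is compact, and the jump part $L_J f_\beta$ is automatically bounded as soon as $f_\beta$ is bounded. The transport part $L_C f_\beta(\theta,i) = d_i(\theta)\,\partial_\theta f_\beta(\theta,i)$ makes pointwise sense provided $f_\beta$ is $C^1$ in $\theta$; in that case a direct computation using Dynkin's formula for the PDMP $(\Theta,I)$ (see for instance~\cite{davis,jacobsen}) shows that $t \mapsto f_\beta(\Theta_t,I_t) - \int_0^t L_\beta f_\beta(\Theta_s,I_s)\,ds$ is a local martingale, and boundedness of $f_\beta$ together with compactness of the state space upgrade it to a true martingale. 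Taking expectations under $\dP_{\mu_\beta}$ and differentiating at $t=0$ (or, equivalently, integrating from $0$ to $1$ and using stationarity of $\mu_\beta$) yields the required identity.

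If one prefers to avoid any regularity issue on $f_\beta$, the same conclusion is obtained by a standard truncation/mollification: replace $f_\beta$ by a smooth approximation $f_\beta^{(n)}$ sharing the same lower bound and the same inequality up to an arbitrarily small error, apply the argument above, and pass to the limit using dominated convergence (everything being bounded on the compact torus). The main obstacle is really just this verification that $f_\beta$ belongs to the extended domain of $L_\beta$; once that is granted, the inequality $\mu_\beta(K \times \{0,1\}) \geq a\rho/C$ follows in two lines.
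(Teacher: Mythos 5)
Your proof is correct and follows exactly the paper's argument: integrate \eqref{eq:lyapunov} against $\mu_\beta$, use $\int L_\beta f_\beta\,d\mu_\beta=0$, and apply the lower bound $f_\beta\geq a$. The extra care you take in justifying the stationarity identity via Dynkin's formula is a reasonable addition (the paper takes it for granted, and the explicit $f_\beta$ constructed later is indeed $\mathcal{C}^1$ in $\theta$ and bounded), but the approach is the same.
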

\begin{proof}
 Integrating~\eqref{eq:lyapunov} with respect to the invariant 
measure $\mu_\beta$, we get:
\[ 0 = \int L_\beta f_\beta d\mu_\beta \leq -  \rho \int f_\beta d\mu_\beta 
+ C \mu_\beta(K),\]
which proves the result. 
\end{proof}

The Lyapunov function $f_\beta$ will be constructed  
by the classical ``perturbation'' method (for details see e.g.\ \cite{Kushner}). 
We start from a test function $f$ (depending only on $\theta$) 
adapted to the averaged dynamical system driven by $d_\lambda$, 
and build a perturbation $f_\beta = f - \beta^{-1} g$ of this function such that 
$L_\beta f_\beta \approx L_A f$; this perturbed function will 
satisfy the hypotheses of Lemma~\ref{lem:lyapImpliqueConcentration} 
with appropriate constants. 

Let $K$ be a small neighborhood of the stable points $\theta_+$, 
$\tilde\theta_+$ and $\epsilon>0$. There exists a $2\pi$-periodic 
function $f$ that satisfies the following properties:
\begin{enumerate}
  \item $f$ is $\mathcal{C}^2(\dR)$, 
  \item $f(\theta_- ) = f(\tilde{\theta}_-) = 2$, $f(\theta_+) = f(\tilde{\theta}_+) = 1$, 
  \item $f'(\theta_-) = f'(\theta_+) = f'(\tilde{\theta}_+) = f'(\tilde{\theta}_-) = 0$, 
  \item $f''(\theta_-) = - 1$, $f''(\theta_+) = \epsilon$, 
  \item $f$ is monotonous between its critical points.
\end{enumerate}
Notice that, by design, $f$ decreases along the trajectories of the 
averaged system:
\begin{equation}
  \label{eq:fLyap}\forall \theta \in [0,2\pi], \quad 
  L_Af(\theta)=d_\lambda(\theta)f'(\theta) \leq 0.
\end{equation}
Let us define $g$ on $\mathbb{T}\times\BRA{0,1}$ by 
\[
g(\theta,i) = L_A f(\theta) - L_C f(\theta,i)
\]
where $L_C$ is the continuous part of the $L_\beta$ defined 
in \eqref{eq:def-LC-LJ}. One can notice that, for any $\theta\in \mathbb{T}$, 
$i\mapsto g(\theta,i)$ is the solution of the Poisson equation 
\[
L_J g(\theta,\cdot)=L_Cf(\theta,\cdot)-L_Af(\theta)=-g(\theta,\cdot)
\]
since, for any $\theta\in \mathbb{T}$, 
\[
L_A f(\theta) = \int L_Cf(\theta,i) d(\lambda \delta_1+(1-\lambda)\delta_0)(i).
\]
Finally, define $f_\beta$ on $\mathbb{T}\times \BRA{0,1}$ by 
\[
f_\beta(\theta,i) = f(\theta) - \frac{1}{\beta} g(\theta,i). 
\]
Applying the generator, we get
\begin{align*}
L_\beta f_\beta (\theta,i) 
&= L_C f(\theta,i) - \beta^{-1} L_Cg(\theta,i) + \beta L_Jf(\theta,i) 
- L_J g(\theta,i) \\
&= L_A f(\theta) - \beta^{-1} L_C g(\theta,i). 
\end{align*}
The definition of $g$ ensures that 
\begin{equation}\label{eq:lbrf}
  L_\beta f_\beta(\theta,i)  = L_A f(\theta) + \beta^{-1} Rf(\theta,i)
  \quad\text{where}\quad Rf(\theta,i) = L_CL_C f(\theta,i) -  L_C L_A f(\theta,i), 
\end{equation}
with 
\begin{align*}
  L_CL_Cf(\theta,i) &= d_i(\theta)^2 f''(\theta) + d_i(\theta) d_i'(\theta) f'(\theta)\\
  L_CL_Af(\theta,i) &= d_i(\theta)d_\lambda(\theta) f''(\theta) 
  + d_i(\theta) d_\lambda'(\theta)f'(\theta).
\end{align*}
Thus there exists $\bar R_\epsilon$ such that 
for any $(\theta,i)\in\dR\times\BRA{0,1}$, 
$\ABS{Rf(\theta,i)}\leq \bar R_\epsilon$. In particular, 
if $\beta$ is sufficiently large, one can assume that 
\begin{equation}
  \label{eq:fBetaBornee}
\frac{1}{2}\leq 1 - \epsilon \leq  f_\beta(\theta,i)\leq 3.
\end{equation}
Let us prove \eqref{eq:lyapunov} between two critical points $\theta_-<\theta_+$
splitting the interval $[\theta_-,\theta_+]$ in three regions
\[
[\theta_-,\theta_-+l_-],\quad
[\theta_-+l_-,\theta_+-l_+]\quad\text{and}\quad
[\theta_+-l_+,\theta_+]
\] 
where $l_-$ and $l_+$ depend on $f$, $\varepsilon$ and $K$ (but not on $\beta$).

\paragraph{First region.} 
Since $\theta_-$ is a critical point of $d_\lambda$, one has $L_Af(\theta_-)=0$.
Moreover $f'(\theta_-)$ is equal to 0 since $f$ reaches its minimum at $\theta_-$. 
From \eqref{eq:lbrf}, the expressions of $L_CL_Cf$ and $L_CL_Af$, we 
get that 
\[
L_\beta f_\beta(\theta_-,i)=\beta^{-1}Rf(\theta_-,i)
=\beta^{-1} d_i(\theta_-)^2f''(\theta_-) \leq  - \beta^{-1}c_u
\]
where
\begin{equation}\label{eq:cu}
  c_u = \min \PAR{d_0(\theta_-)^2, d_1(\theta_-)^2 }>0.
\end{equation}
By continuity, we can find $l_->0$ (that does not depend on $\beta$) 
such that $Rf(\theta, i) \leq - c_u/2$ for $\theta\in [\theta_-,\theta_-+l_-]$.  
Remembering \eqref{eq:fLyap}, we obtain:
\begin{align}
  \notag
  L_\beta f_\beta(\theta,i)
  &\leq \beta^{-1} Rf(\theta,i) \\
  \notag
  &\leq - \frac{c_u}{2} \beta^{-1} \\
  \label{eq=lyapTop}
  &\leq -\frac{c_u}{6}\beta^{-1} f_\beta(\theta,i),
\end{align}
where the last line follows from \eqref{eq:fBetaBornee}. 

\paragraph{Second region.}
For $\theta\in[\theta_- + l_-,\theta_+ - l_+]$, $\abs{d_\lambda(\theta)}$ 
and $\abs{f'(\theta)}$ are bounded below, so $L_Af(\theta) \leq -\rho$ 
for some $\rho>0$ that does not depend on $\beta$. Since $Rf$ is bounded, 
\[
L_\beta f_\beta \leq -\frac{\rho}{2}
\]
for $\beta$ large enough. Then \eqref{eq=lyapTop} also holds 
when $\beta$ is large.  

\paragraph{Third region.} 
Since $\theta_+$ is a critical point of $d_\lambda$ and an extremum of 
$f$, $L_Af(\theta_+)=0$ and from \eqref{eq:lbrf}
\[
L_\beta f_\beta(\theta_+,i)=\beta^{-1}Rf(\theta_+,i)
=\beta^{-1} d_i(\theta_+)^2f''(\theta_+) \leq  \beta^{-1}c_d\epsilon
\]
where
\begin{equation}\label{eq:cd}
  c_d = \max \PAR{d_0(\theta_+)^2, d_1(\theta_+)^2}. 
\end{equation}
By continuity, we can find $l_+>0$ such that, 
for any $\theta\in [\theta_+ - l_+, \theta_+]$, 
\[
0\leq Rf(\theta,i) \leq 2 c_d\epsilon 
\quad\text{and}\quad 
1\leq f(\theta,i) \leq 1+\epsilon. 
\]
Notice that $l_+$ does not depend on $\beta$.
Without loss of generality, one can assume that $K$ contains 
$[\theta_+ - l_+, \theta_+]$. We use \eqref{eq:fLyap} once more to get, 
 for $\theta\in [\theta_+-l_+,\theta_+]$,  
\begin{align*}
 L_\beta f_\beta(\theta,i)
 &\leq 2 c_d \epsilon \beta^{-1}\\
 &\leq -\frac{c_u}{6} \beta^{-1} f_\beta(\theta,i) 
 + \frac{c_u}{6}\beta^{-1}f_\beta(\theta,i) 
 + 2 c_d \epsilon \beta^{-1} \\
 &\leq -\frac{c_u}{6} \beta^{-1} f_\beta(\theta,i) 
 + \beta^{-1} \PAR{(1+ \epsilon)\frac{c_u}{6}
 + 2 c_d \epsilon}. 
\end{align*}
\paragraph{Conclusion.}
Gathering the three estimates provides \eqref{eq:lyapunov} with:
\[
  a = \min_{\theta,i} f_\beta(\theta,i), \quad
  \rho = \frac{c_u}{6} \beta^{-1}  \quad\text{and}\quad
  C = \beta^{-1} \PAR{(1+ \epsilon)\frac{c_u}{6} + 2 c_d \epsilon}.
\]
By \eqref{eq:fLyap}, $a\geq 1- \epsilon$ when $\beta$ is large. By Lemma
\ref{lem:lyapImpliqueConcentration}, 
\[
  \mu(K) \geq \frac{(1-\epsilon)\rho}{C} 
  =\frac{1-\epsilon}{1+\epsilon + 12 (c_d/c_u) \epsilon}.
\]
This can be arbitrarily close to $1$ if we choose $\epsilon$ small enough. 

\section{Two explicit examples with a phase transition}
\label{sec:example}

In this section we perform a detail study of Examples~\ref{ex:jordan} 
and~\ref{ex:rotations}. It has been pointed out in Section~\ref{sec:ergodicAngular} that 
the angular processes associated to these two examples are of different 
type. The first one has two recurrent classes whereas the second one is 
ergodic. Nevertheless, we are able to get a perfect picture of the asymptotic 
of $\NRM{X_t}$ as a function of $\beta$ for these two examples. As the 
studies are similar we present precisely the analysis of 
Example~\ref{ex:rotations} and we provide more briefly the key expressions
for Example~\ref{ex:jordan}.

\subsection{Example \ref{ex:rotations}}
Let $a$ and $b$ be two positive real numbers, $\lambda=1/2$ and set 
\[
A_0=\begin{pmatrix}
-1 & ab \\
-a/b & -1
\end{pmatrix}
\quad 
A_1=
\begin{pmatrix}
-1 & -a/b \\
ab & -1
\end{pmatrix}
\]
and
\[
A_{1/2}=\frac{A_1+A_0}{2}=
\begin{pmatrix}
-1 & a(b-1/b)/2 \\
a(b-1/b)/2 & -1
\end{pmatrix}.
\]
The eigenvalues of $A_0$ and $A_1$ are equal to $-1\pm ia$ whereas 
the eigenvalues of $A_{1/2}$ are $-1\pm a(b-1/b)/2$. If $a(b-1/b)>2$, \emph{i.e.} 
$b>1+\sqrt{1+a^2}$, the matrix $A_{1/2}$ admits a positive and a 
negative eigenvalue. The associated eigenvectors are $(1,1)$ and $(1,-1)$.
The generator of the process $(\Theta_t,I_t)$ is given by
\[
L_\beta f(\theta,i)=
d_i(\theta)\partial_\theta f(\theta,i)+\frac{\beta}{2}(f(\theta,1-i)-f(\theta,i)),
\]
where 
\begin{align*}
d_0(\theta)&=-a/b\cos^2(\theta)-ab\sin^2(\theta)<0\\
d_1(\theta)&=ab\cos^2(\theta)+a/b\sin^2(\theta)>0.
\end{align*}

\begin{lem}\label{lem:inv}
 The invariant measure $\mu_\beta$ of the angular process is given by 
 \[
\mu_\beta(d\theta,i)=
\frac{1}{C(\beta)}\frac{1}{\ABS{d_i(\theta)}}e^{\beta v(\theta)}
\ind_{[0,2\pi]}(\theta)\,d\theta, 
\]
where 
\begin{equation}
  \label{eq:def-v} 
v(\theta)=
\begin{cases}
\displaystyle{ \frac{1}{2a}(\arctan (b\tan(\theta))-\arctan (b^{-1}\tan(\theta)))}
&\text{if }\displaystyle{\theta\neq \pm\frac{\pi}{2}}, \\
0 &\text{otherwise.} 
\end{cases}
\end{equation}
and
\[
C(\beta)=\int_0^{2\pi}\!\SBRA{\frac{1}{d_1(\theta)}
-\frac{1}{d_0(\theta)}} e^{\beta v(\theta)}\,d\theta.
\]
\end{lem}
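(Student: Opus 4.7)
The plan is to exploit uniqueness of the invariant probability measure --- which holds here because $d_0<0<d_1$ on all of $[0,2\pi]$, placing us in case~(a) of Section~\ref{sec:ergodicAngular} --- and simply verify that the proposed $\mu_\beta$ is invariant. Writing $\mu_\beta(d\theta,i)=\rho_i(\theta)\,d\theta$ and integrating $\int L_\beta f\,d\mu_\beta=0$ by parts in $\theta$ on the circle, the invariance relation becomes the stationary Fokker--Planck system
\[
\partial_\theta(d_i\rho_i)=\tfrac{\beta}{2}(\rho_{1-i}-\rho_i),\qquad i=0,1.
\]
The task reduces to checking that the ansatz $\rho_0=e^{\beta v}/(|d_0|\,C(\beta))$ and $\rho_1=e^{\beta v}/(d_1\,C(\beta))$ solves it.

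A first observation is that with this ansatz $d_0\rho_0=-e^{\beta v}/C(\beta)$ and $d_1\rho_1=+e^{\beta v}/C(\beta)$, so their sum vanishes identically and the $i=0$ and $i=1$ equations are negatives of each other. After substitution and cancellation of $e^{\beta v}/C(\beta)$, both reduce to the single identity
\[
v'(\theta)=-\tfrac12\!\left(\tfrac{1}{d_0(\theta)}+\tfrac{1}{d_1(\theta)}\right)=\tfrac12\!\left(\tfrac{1}{|d_0(\theta)|}-\tfrac{1}{d_1(\theta)}\right).
\]
This is the heart of the proof; all else is bookkeeping.

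To establish this identity, I would differentiate~\eqref{eq:def-v} directly, using the elementary formula $\tfrac{d}{d\theta}\arctan(c\tan\theta)=c/(\cos^2\theta+c^2\sin^2\theta)$ at $c=b$ and $c=b^{-1}$, together with the factorizations $d_1(\theta)=a(b\cos^2\theta+b^{-1}\sin^2\theta)$ and $|d_0(\theta)|=a(b^{-1}\cos^2\theta+b\sin^2\theta)$ read off from the definitions. Matching terms makes the identity fall out by inspection. The constant $C(\beta)$ is then fixed by $\int_0^{2\pi}(\rho_0+\rho_1)\,d\theta=1$; rewriting $1/|d_0|=-1/d_0$ produces the stated formula.

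The only genuine subtlety to flag is that the two summands $\arctan(b^{\pm 1}\tan\theta)$ in the definition of $v$ each jump by $-\pi$ at $\theta=\pm\pi/2$; the two jumps cancel in the difference, so $v$ extends continuously with the prescribed value $0$ at those points, and the $\pi$-periodicity of $\tan$ makes $v$ itself $\pi$-periodic on the circle. This ensures that the densities $\rho_i$ are continuous, positive, and $2\pi$-periodic, so $\mu_\beta$ is a bona fide probability measure on $\torus\times\{0,1\}$, completing the identification.
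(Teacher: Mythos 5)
Your verification is correct and follows essentially the same route as the paper: both reduce invariance to the stationary system $\partial_\theta(d_i\rho_i)=\tfrac{\beta}{2}(\rho_{1-i}-\rho_i)$ with $d_0\rho_0+d_1\rho_1=0$, and both hinge on the identity $v'=-\tfrac12\bigl(\tfrac{1}{d_0}+\tfrac{1}{d_1}\bigr)$ obtained from $[\arctan(b^{-1}\tan\theta)]'=a/d_1$ and $[\arctan(b\tan\theta)]'=-a/d_0$. The only (immaterial) difference is direction: you verify the ansatz and invoke uniqueness from irreducibility, whereas the paper derives the ansatz and then notes the converse check; your remark on the cancelling jumps of the arctan terms at $\pm\pi/2$ is a welcome extra detail.
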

\begin{rem}
Notice that $v$ belongs to $\cC^\infty(\mathbb{T})$ and is $\pi$-periodic. 
Moreover, $v'(\theta)=0$ if and only if $\theta=\pm \pi/4+k\pi$. Finally, 
the function $v$ reaches its maximum at $\pi/4+k\pi$ and its minimum 
at $-\pi/4+k\pi$. 
\end{rem}
\begin{proof}[Proof of Lemma \ref{lem:inv}]
If $\mu_\beta$ is an invariant measure for $(\Theta,I)$, then, for any 
smooth function~$f$ on $\mathbb{T}\times \{0,1\}$, one has  
\[
\int_{\mathbb{T}\times \{0,1\}} L_\beta f(\theta,i)d\mu_{\beta}(\theta,i)=0.
\]
Let us look for an invariant measure $\mu_{\beta}$ on $\mathbb{T}\times \{0,1\}$
that can be written as 
\[
\mu_\beta(d\theta,i)=\rho_0(\theta)\ind_0(i)\,d\theta+\rho_1(\theta)\ind_1(i)\,d\theta,
\]
where $\rho_0$ and $\rho_1$ are two smooth and $2\pi$-periodic functions. 
If $f$ does not depend on the discrete variable $i\in\BRA{0,1}$, \emph{i.e.}
 $f(\theta,i)=f(\theta)$, then 
\[
\int_{\mathbb{T}\times \{0,1\}} L_\beta f(\theta)d\mu_{\beta}(\theta,i)
=\int_{\mathbb{T}}\partial_\theta f(\theta)(d_0\rho_0)(\theta) d\theta
+\int_{\mathbb{T}}\partial_\theta f(\theta)(d_1\rho_1)(\theta) d\theta,
\]
and an integration by parts leads to 
\[
\int_{\mathbb{T}\times \{0,1\}} L_\beta f(\theta)d\mu_{\beta}(\theta,i)=
-\int_{\mathbb{T}}f(\theta)
[d_0\rho_0 + d_1\rho_1]'(\theta) d\theta
\]
This ensures that $d_0\rho_0+d_1\rho_1$ must be constant. Let us assume 
that one can find $\rho_0$ and $\rho_1$ such that 
$d_0\rho_0+d_1\rho_1=0$. Now, if 
$f$ is such that $f(\theta,0)=f(\theta)$ et $f(\theta,1)=0$, we get 
\[
\int_{\mathbb{T}\times \{0,1\}}\! L_\beta f(\theta,i)d\mu_{\beta}(\theta,i)
=\int_{\mathbb{T}}\left[d_0(\theta)\partial_\theta f(\theta)
-\frac{\beta}{2} f(\theta)\right]\rho_0(\theta) d\theta
+\int_{\mathbb{T}}\frac{\beta}{2} f(\theta)\rho_1(\theta) d\theta
\]
and, after an integration by parts, 
\[
\int_{\mathbb{T}\times \{0,1\}}\!L_\beta f(\theta,i)d\mu_{\beta}(\theta,i)
=\int_{\mathbb{T}}f(\theta)\left[-(d_0\rho_0)'(\theta)
+\frac{\beta}{2}(\rho_1(\theta)-\rho_0(\theta))\right] d\theta.
\] 
Let us define $\phi=d_0\rho_0$. Then $\rho_0=\frac{\phi}{d_0}$ and 
$\rho_1=-\frac{\phi}{d_1}$. The function $\phi$ is solution of the following 
ordinary differential equation: 
\begin{equation}\label{eq:edo}
 \phi'=-\frac{\beta}{2}\PAR{\frac{1}{d_1}+\frac{1}{d_0}}\phi.
\end{equation}
This equation admits a solution on $\mathbb{T}$ (\emph{i.e.} $2\pi$-periodic) 
since the integral of $\frac{1}{d_1}+\frac{1}{d_0}$ on $[-\pi,\pi]$ is equal to 0.
In fact this is already true on $[-\pi/2,\pi/2]$. Since $d_0$ and $d_1$ are 
explicit trigonometric functions, one can find an explicit 
expression for $\phi$. Notice that 
\begin{align*}
\SBRA{\arctan \PAR{b^{-1}\tan(\theta)}}'
&=\frac{1}{b} \cdot \frac{1+\tan^2(\theta)}{1+\frac{\tan^2(\theta)}{b^2}}
=\frac{1}{b\cos^2(\theta)+\frac{1}{b}\sin^2(\theta)}=\frac{a}{d_1(\theta)}\\
\SBRA{\arctan (b\tan(\theta))}'&=-\frac{a}{d_0(\theta)}.
\end{align*}
The differential equation \eqref{eq:edo} becomes $\phi'=\beta v'\phi$ where 
$v$ is given by \eqref{eq:def-v} and 
its solutions are given by 
\[
\phi=K\exp(\beta v).
\]
This relation provides the expression of $\rho_0$ and $\rho_1$ up to the 
multiplicative constant $K$. Since we are looking for probability measures, 
$K$ is such that 
\[
K\int_{\mathbb{T}}\PAR{\frac{1}{d_0(\theta)}-\frac{1}{d_1(\theta)}}
\phi(\theta)d\theta=1.
\]
Conversely, it is easy to check that the measure given in Lemma~\ref{lem:inv}
is invariant for $L_\beta$. 
\end{proof}

Let us now consider the function $\chi$ given by 
\[
\chi(\beta)=\int\! \mathcal{A}(\theta,i)\,d\mu_\beta(\theta,i).
\] 

\begin{lem}
 The  function $\beta\mapsto \chi(\beta)$ is a $\cC^1$ and monotonous 
 application on $[0,+\infty)$ such that $\chi'$ has the sign of $b^2-1$ and
 \[
\chi(0)=-1,\quad\lim_{\beta\to\infty}\chi(\beta)=\frac{a(b^2-1)}{2b}-1.
\]
\end{lem}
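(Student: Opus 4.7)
The plan is to start from the explicit invariant measure in Lemma~\ref{lem:inv} and exploit two special features of this example. First, since $A_0$ and $A_1$ have the same symmetric part, the quadratic form $\mathcal{A}(\theta, i) = \langle A_i e_\theta, e_\theta \rangle$ does not depend on $i$ and equals $-1 + c \sin(2\theta)$ with $c = a(b^2-1)/(2b)$. After summing over $i$, $\chi$ takes the explicit form
\[
  \chi(\beta) = -1 + c\,\frac{N(\beta)}{D(\beta)}, \quad
  N(\beta) = \int_0^{2\pi}\!\sin(2\theta)\, h(\theta)\, e^{\beta v(\theta)}\, d\theta, \quad
  D(\beta) = \int_0^{2\pi}\! h(\theta)\, e^{\beta v(\theta)}\, d\theta,
\]
where $h(\theta) = |d_0(\theta)|^{-1} + d_1(\theta)^{-1}$. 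Smoothness of $\chi$ on $[0, +\infty)$ is then routine: both $N$ and $D$ are $\mathcal{C}^\infty$ in $\beta$ by differentiation under the integral sign, and $D > 0$.

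For $\chi(0) = -1$, the key observation is that the explicit formulas for $d_0, d_1$ satisfy $|d_0(\theta + \pi/2)| = d_1(\theta)$ and vice versa, so $h$ is $\pi/2$-periodic. Since $\sin(2(\theta + \pi/2)) = -\sin(2\theta)$, the substitution $\theta \mapsto \theta + \pi/2$ in $N(0)$ yields $N(0) = -N(0)$, hence $N(0) = 0$.

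For monotonicity, logarithmic differentiation of $N/D$ gives
\[
  \chi'(\beta) = c\,\operatorname{Cov}_{\tilde\mu_\beta}\!\bigl(\sin(2\theta), v\bigr), \qquad
  d\tilde\mu_\beta = D(\beta)^{-1} h(\theta)\, e^{\beta v(\theta)}\, d\theta.
\]
The crucial structural observation, read off \eqref{eq:edo} after using $d_1+d_0=a(b-1/b)\cos(2\theta)$, is
\[
 v'(\theta) = -\tfrac{1}{2}\!\left(\tfrac{1}{d_0(\theta)}+\tfrac{1}{d_1(\theta)}\right)
 = \frac{a(b - 1/b) \cos(2\theta)}{2\, d_1(\theta)\,|d_0(\theta)|},
\]
which is proportional to $(\sin(2\theta))' = 2\cos(2\theta)$ by a factor of constant sign $\operatorname{sign}(b^2 - 1)$. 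The functions $\sin(2\theta)$ and $v$ are thus strictly comonotone when $b > 1$ (respectively anti-monotone when $b < 1$). Applying the classical symmetrization identity
\[
  \operatorname{Cov}(f, g) = \tfrac{1}{2}\iint\!\bigl(f(\theta_1) - f(\theta_2)\bigr)\bigl(g(\theta_1) - g(\theta_2)\bigr)\, d\tilde\mu_\beta(\theta_1)\, d\tilde\mu_\beta(\theta_2)
\]
then yields the sign of the covariance, and combined with the factor $c$ this gives the stated sign of $\chi'(\beta)$.

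Finally, for the limit at infinity, a standard Laplace argument applies: $v$ is $\pi$-periodic and attains its (non-degenerate) maximum on a fundamental domain at $\theta = \pi/4$, where $\sin(2\theta) = 1$. Hence $\tilde\mu_\beta$ concentrates at $\{\pi/4, 5\pi/4\}$ as $\beta \to \infty$, so $N/D \to 1$ and $\chi(\beta) \to -1 + c = a(b^2-1)/(2b) - 1$. The main obstacle is the monotonicity step: the covariance formula for $\chi'$ is a routine consequence of logarithmic differentiation, but its sign relies on the delicate proportionality of $v'$ and $(\sin(2\theta))'$ through $\cos(2\theta)$, which is an intrinsic feature of this example rather than a general phenomenon.
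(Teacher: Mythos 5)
Your overall route is exactly the paper's: the explicit density of $\tilde\mu_\beta$ from Lemma~3.1, the reduction $\chi(\beta)=-1+c\,N(\beta)/D(\beta)$ with $c=a(b^2-1)/(2b)$, logarithmic differentiation giving $\chi'(\beta)=c\,\mathrm{Cov}_{\tilde\mu_\beta}(\sin(2\cdot),v)$, a parity argument for $\chi(0)=-1$, and Laplace concentration at $\{\pi/4,5\pi/4\}$ for the limit. Those steps are all correct, and your computation $v'=-\tfrac12\bigl(\tfrac{1}{d_0}+\tfrac{1}{d_1}\bigr)=\frac{a(b-1/b)\cos(2\theta)}{2\,d_1\,|d_0|}$ checks out. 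The only place you genuinely depart from the paper is in how you justify the sign of the covariance, and that is where there is a gap.

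The implication ``$v'$ is a positive multiple of $(\sin(2\theta))'$, hence $\sin(2\cdot)$ and $v$ are comonotone'' is false in general. Sharing the sign of the derivative pointwise only says the two functions increase and decrease on the same subintervals; it does not give $(f(\theta_1)-f(\theta_2))(g(\theta_1)-g(\theta_2))\geq 0$ when $\theta_1,\theta_2$ lie in \emph{different} monotonicity intervals (take $f=\sin(2\theta)$ and any $g$ with $g'=\phi f'$, $\phi>0$ not symmetric about $\pi/4$: then $f(0)=f(\pi/2)$ while generically $g(0)\neq g(\pi/2)$, and pairs near $0$ and just past $\pi/2$ violate comonotonicity). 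What rescues the argument here is an extra symmetry you must invoke explicitly: $v$ is $\pi$-periodic and satisfies $v(\pi/2-\theta)=v(\theta)$ (use $\arctan x+\arctan(1/x)=\pm\pi/2$), so $v$ is constant on each level set $\{\theta,\ \pi/2-\theta,\ \theta+\pi,\ 3\pi/2-\theta\}$ of $\sin(2\theta)$; combined with the common monotonicity intervals this shows $v=\psi(\sin(2\theta))$ for a nondecreasing $\psi$ when $b>1$, and then your symmetrization identity gives $\mathrm{Cov}>0$. Two further remarks. First, your parenthetical treatment of $b<1$ does not deliver the stated sign: there $c<0$ and $\mathrm{Cov}<0$, so your formula gives $\chi'>0$; you should simply restrict to $b>1$, which is what Assumption~1.1 forces anyway (and is implicit in the paper). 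Second, once repaired, your covariance argument is actually more robust than the paper's own, which deduces positivity from the claim that $\sin(2\cdot)$ has zero mean under $\tilde\mu_\beta$ — an identity that can only hold at $\beta=0$, since for $\beta>0$ it would force $\chi(\beta)\equiv-1$.
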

\begin{proof}
From the definition of $A_i$ and $\mathcal{A}$, we get that, for $i\in\BRA{0,1}$, 
\[
\mathcal{A}(\theta,i)=\DP{A_i e_\theta,e_\theta}
=\frac{a(b^2-1)}{2b}\sin(2\theta)-1.
\]
For sake of simplicity, $\mathcal{A}(\theta)$ stands for 
$\mathcal{A}(\theta,0)=\mathcal{A}(\theta,1)$.
Thus, $\chi(\beta)$ is given by 
\[
\chi(\beta)=\int_0^{2\pi}\!\mathcal{A}(\theta)
\tilde\mu_\beta(d\theta),
\]
where 
\[
\tilde \mu_\beta(d\theta)=
\frac{1}{C(\beta)} \PAR{\frac{1}{d_1(\theta)}-\frac{1}{d_0(\theta)}} 
e^{\beta v(\theta)}\ind_{[0,2\pi]}\,d\theta.
\]
Its derivative is given by 
\begin{align*}
\chi'(\beta)&=
\int_0^{2\pi}\!\mathcal{A}(\theta)v(\theta)\tilde \mu_\beta(d\theta)
-\frac{C'(\beta)}{C(\beta)}\int_0^{2\pi}\!\mathcal{A}(\theta)\tilde \mu_\beta(d\theta)
\\
&=\int_0^{2\pi}\!\mathcal{A}(\theta)v(\theta)\tilde \mu_\beta(d\theta)
-\int_0^{2\pi}\!v(\theta)\tilde \mu_\beta(d\theta)
\int_0^{2\pi}\!\mathcal{A}(\theta)\tilde \mu_\beta(d\theta).
\end{align*}
In other words, one has 
\begin{align*}
\chi'(\beta)&=\mathrm{Cov}_{\tilde\mu_\beta}\PAR{\mathcal{A(\cdot)},v(\cdot)}\\
&=\frac{a(b^2-1)}{2b}\mathrm{Cov}_{\tilde\mu_\beta}\PAR{\sin(2\cdot),v(\cdot)}.
\end{align*}
The mean of $\sin(2\cdot)$ with respect to $\tilde \mu_\beta$ is equal to 
$0$. Besides, $\theta \mapsto v(\theta)\sin(2\theta)$ is nonnegative (and non 
constant) on $\torus$. Thus, $\chi'$ has the sign of $b^2-1$. 

If $\beta=0$, one has
\begin{align*}
\chi(0)
&= \frac{1}{C(0)}\int_0^{2\pi}\! \PAR{\frac{a(b^2-1)}{2b}\sin(2\theta)-1}
                        \PAR{\frac{1}{d_1(\theta)}-\frac{1}{d_0(\theta)}} d\theta \\
&=-\frac{1}{C(0)} 
\int_0^{2\pi}\!\PAR{\frac{1}{d_1(\theta)}-\frac{1}{d_0(\theta)}}d\theta
 = -1 < 0.
\end{align*}
Finally, as $\beta$ goes to $\infty$, the probability measure $\nu_\beta$ 
converges to a probability measure concentrated on the points 
$\{\pi/4,5\pi/4,\}$ where $v$ reaches its maximum. We get 
\[
\lim_{\beta\rightarrow +\infty}\chi(\beta)=\frac{a(b^2-1)}{2b}-1.
\]
This concludes the proof.
\end{proof}

\begin{cor}
 If $b>1+\sqrt{1+a^2}$, then there exists $\beta_c\in(0,+\infty)$ such that 
$\chi$ is negative on $(0,\beta_c)$ and positive on $(\beta_c,+\infty)$. 
\end{cor}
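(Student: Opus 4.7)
The plan is to apply the previous lemma directly, together with the intermediate value theorem: the three ingredients provided by the lemma (continuity, strict monotonicity, and the values at $0$ and at $+\infty$) cover essentially everything.

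First I observe that the hypothesis $b>1+\sqrt{1+a^2}$ has two consequences. On the one hand, since $b>1$ we have $b^2-1>0$, so by the preceding lemma $\chi'(\beta)>0$ for all $\beta>0$; thus $\chi$ is strictly increasing on $[0,+\infty)$. On the other hand, the inequality on $b$ is exactly what is needed to ensure that the positive eigenvalue of $A_{1/2}$ is present (this is how the hypothesis entered in the first place), which is equivalent to $a(b-1/b)>2$, i.e. $\frac{a(b^2-1)}{2b}>1$, so that
\[
\lim_{\beta\to+\infty}\chi(\beta)=\frac{a(b^2-1)}{2b}-1>0.
\]

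Since $\chi$ is continuous on $[0,+\infty)$ (even $\mathcal{C}^1$), strictly increasing, with $\chi(0)=-1<0$ and positive limit at infinity, the intermediate value theorem produces a unique $\beta_c\in(0,+\infty)$ with $\chi(\beta_c)=0$. Strict monotonicity then gives $\chi<0$ on $(0,\beta_c)$ and $\chi>0$ on $(\beta_c,+\infty)$, which is exactly the claim. There is no real obstacle here; the work has already been done in the preceding lemma, and the only small verification is the elementary algebraic manipulation showing that $b>1+\sqrt{1+a^2}$ forces the limiting value $\frac{a(b^2-1)}{2b}-1$ to be strictly positive, which is the same computation used in the definition of the example.
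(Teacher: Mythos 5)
Your proof is correct and is exactly the argument the paper intends: the corollary is an immediate consequence of the preceding lemma (continuity, strict monotonicity from $b^2-1>0$, $\chi(0)=-1$, and the positive limit at infinity, which is equivalent to $a(b-1/b)>2$), combined with the intermediate value theorem. The paper leaves this deduction implicit, so there is nothing to add.
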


\subsection{Example \ref{ex:jordan}}

Let us define $A_0$ and $A_1$ by 
\[
A_0=\PAR{
\begin{array}{cc}
-1 & 2b \\
0  & -1 
\end{array}
}
\quad\text{and}\quad
A_1=\PAR{
\begin{array}{cc}
-1 & 0 \\
2b  & -1 
\end{array}
}
\]
with $b>0$. Then $A_0$ and $A_1$ are two Jordan matrices and 
the eigenvalues of $A_{1/2}$ are given by $-1\pm b$. In this case, 
\[
d_0(\theta)=-2b\sin^2(\theta)\leq 0
\quad\text{and}\quad
d_1(\theta)=2b\cos^2(\theta)\geq 0,
\]
and $(\Theta,I)$ has two recurrent classes 
\[
C_1=\BRA{(\theta,i)\,:\, \theta\in(0,\pi/2),\ i=0,1} 
\quad\text{and}\quad
C_2\BRA{(\theta,i)\,:\, \theta\in(\pi,3\pi/2),\ i=0,1}.
\]
It can be shown, following the lines of the previous section that the 
ergodic invariant measure $\mu_\beta$ of the angular process on 
$C_1$ is given by 
 \[
\mu_\beta(d\theta,i)=
\frac{1}{C(\beta)}\cdot \frac{1}{\ABS{d_i(\theta)}}e^{\beta v(\theta)}
\ind_{(0,\pi/2)}(\theta)\,d\theta, 
\]
where 
\[
v(\theta)=-\frac{1}{2b\sin(2\theta)}
\quad\text{and}\quad
C(\beta)=\frac{2}{b}\int_0^{\pi/2}\!\frac{1}{\sin^2(2\theta)}
e^{\beta v(\theta)}\,d\theta.
\]
Moreover, for any $\beta>0$, 
\[
\chi(\beta)=-1+\frac{1}{C(\beta)}\int_0^{\pi/2}\!\frac{2}{\sin(2\theta)} e^{\beta v(\theta)}
\,d\theta. 
\]
In particular, the  function $\beta\mapsto \chi(\beta)$ is a $\cC^1$ increasing 
application on $[0,+\infty)$ such that 
 \[
\chi(0)=-1,\quad\lim_{\beta\to\infty}\chi(\beta)=-1+b.
\]

\begin{cor}
  If $b>1$, then there exists $\beta_c\in(0,+\infty)$ such that 
$\chi$ is negative on $(0,\beta_c)$ and positive on $(\beta_c,+\infty)$. 
\end{cor}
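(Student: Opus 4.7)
The plan is short because most of the work has already been done. The corollary is a direct consequence of the four properties of $\chi$ recalled in the preceding paragraph: $\chi$ is $\cC^1$ on $[0,+\infty)$ (in particular continuous), it is increasing, $\chi(0)=-1$, and $\lim_{\beta\to\infty}\chi(\beta)=-1+b$. The hypothesis $b>1$ is precisely what makes the two boundary values have opposite sign, $\chi(0)=-1<0<b-1=\lim_{\beta\to\infty}\chi(\beta)$. The intermediate value theorem then yields some $\beta_c\in(0,+\infty)$ with $\chi(\beta_c)=0$, and the monotonicity of $\chi$ forces $\chi<0$ on $(0,\beta_c)$ and $\chi>0$ on $(\beta_c,+\infty)$ (the same monotonicity also makes $\beta_c$ unique).

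The only point that actually requires work is the strict monotonicity of $\chi$, which I would justify along the same lines as in Example~\ref{ex:rotations}. Differentiating under the integral sign and using $\frac{d}{d\beta}e^{\beta v}=v\,e^{\beta v}$ gives $\chi'(\beta)=\mathrm{Cov}_{\tilde\mu_\beta}(\mathcal{A}(\cdot),v(\cdot))$, where $\tilde\mu_\beta$ is the $\theta$-marginal of $\mu_\beta$ on the recurrent class $C_1=(0,\pi/2)$. A direct computation from the matrices $A_0,A_1$ shows that $\mathcal{A}(\theta,i)=-1+b\sin(2\theta)$ (in particular independent of $i$), while $v(\theta)=-1/(2b\sin(2\theta))$. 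Both $\mathcal{A}$ and $v$ are strictly increasing functions of the single variable $s=\sin(2\theta)\in(0,1]$, hence comonotone on $(0,\pi/2)$. Applying the classical identity
\[
\mathrm{Cov}_{\tilde\mu_\beta}(\mathcal{A},v)=\frac{1}{2}\iint\bigl(\mathcal{A}(\theta_1)-\mathcal{A}(\theta_2)\bigr)\bigl(v(\theta_1)-v(\theta_2)\bigr)\,d\tilde\mu_\beta(\theta_1)\,d\tilde\mu_\beta(\theta_2)
\]
then yields $\chi'(\beta)>0$, since the integrand is everywhere nonnegative and strictly positive on a set of positive $\tilde\mu_\beta\otimes\tilde\mu_\beta$-measure.

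No step is genuinely hard: the main obstacle, such as it is, is simply to be careful with integrability near the endpoints of $C_1$, where $v(\theta)\to-\infty$. One must check that the exponential weight $e^{\beta v(\theta)}$ kills the singularity $1/\sin^2(2\theta)$ in the density of $\tilde\mu_\beta$ and legitimises differentiation under the integral sign. This is the same integrability check as in Example~\ref{ex:rotations} and causes no real trouble, so the whole proof of the corollary collapses to an IVT argument.
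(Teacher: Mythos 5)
Your proof is correct and follows the route the paper intends: the corollary is an immediate consequence of the four stated properties of $\chi$ (continuity, monotonicity, $\chi(0)=-1$, $\lim_{\beta\to\infty}\chi(\beta)=b-1$) via the intermediate value theorem, and the paper offers no further argument. The one place where you add genuine content is the strict monotonicity of $\chi$, which the paper dismisses with ``following the lines of the previous section''; your comonotonicity argument is the right way to do this, and it is worth noting that the paper's own trick for Example~\ref{ex:rotations} (namely that $\sin(2\cdot)$ has zero mean under $\tilde\mu_\beta$ and $v\sin(2\cdot)\geq 0$ on the whole circle) does \emph{not} transfer verbatim here, since on the recurrent class $C_1=(0,\pi/2)$ the mean of $\sin(2\cdot)$ is strictly positive. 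Your observation that $\mathcal{A}(\theta)=-1+b\sin(2\theta)$ and $v(\theta)=-1/(2b\sin(2\theta))$ are both increasing functions of $s=\sin(2\theta)$, combined with the symmetrized covariance identity, gives the correct and self-contained justification that $\chi'>0$. The integrability caveat you raise is also the right one: for $\beta>0$ the factor $e^{\beta v(\theta)}$ decays like $e^{-c/\sin(2\theta)}$ at the endpoints of $C_1$ and dominates the $1/\sin^2(2\theta)$ singularity, which legitimises both the normalisation $C(\beta)<\infty$ and differentiation under the integral sign.
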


\section{Application to matrix products}
\label{sec:matrices}
The process studied in the preceding sections is linked to some products 
of random matrices. Let us consider the embedded chain of our process 
defined by the sequence of the positions of the process $X$ at the times 
when the second coodinate $I$ changes, that is the positions at the times 
when one changes the flow. The jump times are given by sums of 
independent random variables with exponential law of parameters 
$\lambda_0\beta$ and $\lambda_1\beta$. To study this embedded chain is 
to study the linear images of vectors by products of independent random 
matrices which distributions are the image laws of  exponential law of 
parameter~1 by the two mappings  
\[
s\mapsto \exp((s/\beta\lambda_0) A_0)
\quad \text{and}\quad 
s\mapsto \exp((s/\beta\lambda_1) A_1).
\]
Let us denote ${(T_k)}_{k\geq 0}$ the sequence of the jump times of 
the second coordinate (with the convention $T_0=0$) and ${(Z_k)}_{k\geq 0}$ 
the sequence of the positions of $X$ at these times:
\[
Z_k=X_{T_k}. 
\]
The embedded chain and the process ${(X_t)}{t\geq 0}$ are linked as follows. 
For $t\in]T_k,T_{k+1}]$ one has :
\[
X_t=\exp\PAR{\frac{t-T_k}{\beta\lambda_{i_k}}A_{i_k}}Z_k,
\]
where $i_k$ is 0 or 1 depending on the evenness of $k$.
Thus, 
\[
Z_k=U_kU_{k-1}\ldots U_1X_0
\quad\text{where}\quad
U_l=\exp\PAR{\frac{T_l-T_{l-1}}{\beta\lambda_{i_{l-1}}}A_{i_{l-1}}}.
\]
For example we can fix that $i_0=0$, which means that at time 0, $X$ is driven 
by the vector field $x\mapsto A_0 x$.

Let $e^{(1)}$ and $e^{(2)}$ be the element of the canonical basis of 
$\mathbb{R}^2$, $X_t^{(1)}$ and $X_t^{(2)}$ the processes starting 
from $e^{(1)}$ and $e^{(2)}$ respectively. From the equality
\[
X_t^{(1)}=\exp\PAR{\frac{t-T_k}{\beta\lambda_{i_k}}A_{i_k}}
U_kU_{k-1}\ldots U_1e^{(1)},
\]
we get
\begin{align*}
\|U_kU_{k-1}\ldots U_1\|&\geq \|U_kU_{k-1}\ldots U_1e^{(1)}\|  \\
&\geq \|\exp(-((t-T_k)/\beta\lambda_{i_k})A_{i_k})X_t^{(1)}\|  \\
&\geq \|\exp(((t-T_k)/\beta\lambda_{i_k})A_{i_k})\|^{-1}\|X_t^{(1)}\|.
\end{align*}
On the other hand, for $t\in]T_k,T_{k+1}]$, we have
\begin{align*}
\|U_kU_{k-1}\ldots U_1\|
&\leq \|U_kU_{k-1}\ldots U_1e^{(1)}\|+\|U_kU_{k-1}\ldots U_1e^{(2)}\|\\
&=\sum_{j=1}^2\|\exp(-((t-T_k)/\beta\lambda_{i_k})A_{i_k})\|\|X_t^{(j)}\|\\
&\leq 2\|\exp(-((t-T_k)/\beta\lambda_{i_k})A_{i_k})\|
\max(\|X_t^{(1)}\|,\|X_t^{(2)}\|).
\end{align*}
According to Theorem~\ref{thm:mainResult} almost surely both limits
\[
\lim_{t\rightarrow\infty}\frac{1}{t}\log \|X_t^{(1)}\|\
\quad\text{and} \quad 
\lim_{t\rightarrow\infty}\frac{1}{t}\log \|X_t^{(2)}\|
\]
exist and are equal to $\chi(\beta)$. Moreover, almost surely, the 
ratio $(t-T_k)/ t$ tends to 0 and, as $T_k$ is the sum of independent 
random variables of parameter $\lambda\beta$ and $(1-\lambda)\beta$, 
the strong law of large numbers gives 
\[
\frac{T_{2k}}{2k}\xrightarrow[k\rightarrow \infty]{}\frac{1}{2\lambda\beta}
+\frac{1}{2(1-\lambda)\beta}
=\frac{1}{2\lambda(1-\lambda)\beta},
\]
so that $T_k/k$ almost surely tends toward 
$\PAR{2\lambda(1-\lambda)\beta}^{-1}$. Putting things together we get 
that, almost surely,
\[
\lim_{k\rightarrow\infty}\frac{1}{k}\log \|U_kU_{k-1}\ldots U_1\|
=\frac{\chi(\beta)}{2\lambda(1-\lambda)\beta}.
\]
In particular this limit has the same sign as $\chi(\beta)$, it is negative for 
small $\beta$ and positive for large $\beta$.

This does give an example of a product of contracting independent matrices 
with a positive Lyapunov exponent but in this case the matrices $({U_k})_k$ 
do not have the same distribution : it depends on the evenness of $k$. If we 
group the $U_k$ by 2 we get a product of independent matrices with the same 
distribution but they are not always contracting: some matrices in the image of
\[
(s,t)\mapsto \exp\PAR{\frac{t}{\beta\lambda_1}A_1}
\exp\PAR{\frac{s}{\beta\lambda_0}A_0}
\]
are hyperbolic. 

So let us slightly modifiy the process we began with. When the second 
coordinate is $i\in\{0,1\}$, at each date given by the sum of independent 
random variables with exponential law of parameter $\lambda_i\beta$ 
one chooses independently with probability 1/2 to keep the flow $i$ or with
probability 1/2 to flip to the flow $1-i$. As an independent geometric random 
sum of exponential independent random variables is still an exponential 
random variable, in continuous time, this modification is simply a change 
of parameter $\beta$ (replaced par $\beta/2$). The embedded chain defined 
by the position at times given by (not the changes of flow but) the sums 
of exponential random variables, also corresponds to a products of 
independent random matrices, and this time, all matrices considered 
are contracting.
 
Let $(D_k)$ denotes the sequence of dates considered in this case. 
It is a sum of $k$ independent exponential variables of 
parameters~$\beta\lambda_0$ and $\beta\lambda_1$ and, almost 
surely, asymptotically, half of them are of parameter $\beta\lambda_0$, 
half of them of parameter $\beta\lambda_1$. So that, as before, $D_k/k$ 
almost surely tends to $\PAR{2\lambda(1-\lambda)\beta}^{-1}$. These 
remarks and the preceding computation give the following proposition.
\begin{prop}
Let $A_0$ and  $A_1$ two matrices such that Assumption~\ref{ass:lambda} 
is satisfied. Let ${(V_k)}_{k\geq 1}$ be a sequence of independent matrices with 
distribution given by the half sum of the image measures of the exponential 
law of parameter 1 by the two mappings
\[
s\mapsto \exp\PAR{\frac{s}{\beta\lambda_0}A_0}
\quad\text{and}\quad
t\mapsto \exp\PAR{\frac{t}{\beta\lambda_1}A_1}.
\]
Then almost surely, one has
\[
\lim_{k\rightarrow\infty}\frac{1}{k}\log \|V_kV_{k-1}\ldots V_1\|
=\frac{\chi(\beta/2)}{2\lambda(1-\lambda)\beta},
\]
and if $\beta$ is sufficiently large this limit is positive.
\end{prop}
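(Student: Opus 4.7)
The plan is to reduce the statement to Theorem~\ref{thm:mainResult} by identifying the sequence $V_k V_{k-1} \cdots V_1$ with an embedded chain of a PDMP whose jump parameter is $\beta/2$. The key observation, already noted in the paragraph preceding the proposition, is the thinning property of exponential clocks: if we run the flow $A_i$ for a time given by an exponential of parameter $\beta \lambda_i$ and then flip the component $I$ only with probability $1/2$, then the effective waiting time between genuine changes of flow is the sum of a geometric number of exponentials, which is itself exponential with parameter $\beta\lambda_i/2$. Hence the resulting $(X_t,I_t)$ is exactly the PDMP of the introduction with $\beta$ replaced by $\beta/2$, and Theorem~\ref{thm:mainResult} yields
\[
\frac{1}{t}\log\|X_t\|\xrightarrow[t\to\infty]{a.s.}\chi(\beta/2).
\]

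Next I would compare the discrete-time quantity $\|V_k\cdots V_1\|$ to the continuous-time quantity $\|X_{D_k}\|$. By construction, $X_{D_k} = V_k \cdots V_1 X_0$ for any starting point $X_0$, so if $X_t^{(1)}$ and $X_t^{(2)}$ denote the solutions starting from the canonical basis vectors, the same squeeze as in the discussion before the proposition gives
\[
\max_{j=1,2}\|X_{D_k}^{(j)}\|\leq \|V_k\cdots V_1\|\leq \sum_{j=1,2}\|X_{D_k}^{(j)}\|.
\]
Taking logarithms and dividing by $k$, both sides have the same asymptotic behavior, and Theorem~\ref{thm:mainResult} applied to $X^{(1)}$ and $X^{(2)}$ (both start from a nonzero vector) gives
\[
\frac{1}{k}\log\|V_k\cdots V_1\| \sim \frac{D_k}{k}\cdot\frac{1}{D_k}\log\|X_{D_k}\|\xrightarrow[k\to\infty]{a.s.}\ell\cdot\chi(\beta/2),
\]
where $\ell = \lim_k D_k/k$.

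Then I would compute $\ell$ by the strong law of large numbers. The waiting time $D_k - D_{k-1}$ is exponential with parameter $\beta\lambda_{I_{k-1}}$. Grouping consecutive pairs as the authors do just before the proposition, $D_{2k}/(2k)$ converges almost surely to $\tfrac{1}{2}(\tfrac{1}{\beta\lambda_0}+\tfrac{1}{\beta\lambda_1}) = (2\lambda(1-\lambda)\beta)^{-1}$; since $(D_{k+1}-D_k)/k \to 0$ a.s.\ (Borel--Cantelli applied to exponential tails), the full sequence $D_k/k$ has the same limit. Plugging this into the display above yields
\[
\lim_{k\to\infty}\frac{1}{k}\log\|V_k\cdots V_1\| = \frac{\chi(\beta/2)}{2\lambda(1-\lambda)\beta}\qquad\text{a.s.}
\]
For the final assertion, Theorem~\ref{thm:mainResult} ensures $\chi(\beta/2)>0$ as soon as $\beta/2>\beta_2$, so the Lyapunov exponent of the product is positive for $\beta$ large.

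The main technical point is the reduction to the PDMP with parameter $\beta/2$: one must verify that, conditionally on the current state $(X,I)$, the joint law of the next flow segment and the post-jump index is exactly that of the original process at rate $\beta/2$. This is a routine consequence of the memorylessness of the exponential and of the fact that a geometric$(1/2)$ sum of i.i.d.\ Exp$(\beta\lambda_i)$ variables is Exp$(\beta\lambda_i/2)$, but it is the only nontrivial step; the squeeze argument and the law of large numbers for $D_k/k$ are straightforward.
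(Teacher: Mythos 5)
Your proposal is correct and follows essentially the same route as the paper: identify the thinned process with the PDMP at rate $\beta/2$ via the geometric-sum-of-exponentials property, squeeze $\|V_k\cdots V_1\|$ between the norms of the images of the two canonical basis vectors, apply Theorem~\ref{thm:mainResult} to each, and conclude with the law of large numbers for $D_k/k$. The only (harmless) imprecision is the appeal to ``grouping consecutive pairs'' for $D_k/k$: in the thinned chain the flow indices no longer alternate deterministically but are i.i.d.\ uniform on $\{0,1\}$, so the plain strong law already gives $D_k/k\to(2\lambda(1-\lambda)\beta)^{-1}$.
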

Thus we have obtained examples of product of random independent 
identically distributed matrices, all contracting, with a positive 
Lyapounov exponent.

\bigskip
\noindent
\textbf{Acknowledgements.} FM and PAZ thank MB for his kind hospitality 
and his coffee breaks. We acknowledge financial support from the Swiss National Foundation Grant  FN 200021-138242/1
and the French ANR projects EVOL and ProbaGeo.

\addcontentsline{toc}{section}{\refname}%
{ \footnotesize
\bibliography{exemple}
\bibliographystyle{amsplain}
}

\bigskip

{\footnotesize
\noindent Michel~\textsc{Benaïm}, 
 e-mail: \texttt{michel.benaim(AT)unine.ch}
 \medskip

\noindent\textsc{Institut de Math\'ematiques, Universit\'e de Neuch\^atel, 
11 rue \'Emile Argand, 2000 Neuch\^atel, Suisse.}

\bigskip

\noindent Stéphane~\textsc{Le Borgne}, 
 e-mail: \texttt{stephane.leborgne(AT)univ-rennes1.fr}

 \medskip

  \noindent\textsc{UMR 6625 CNRS Institut de Recherche Math\'ematique de
    Rennes (IRMAR) \\ Universit\'e de Rennes 1, Campus de Beaulieu, F-35042
    Rennes \textsc{Cedex}, France.}

\bigskip

 \noindent Florent~\textsc{Malrieu},
 e-mail: \texttt{florent.malrieu(AT)univ-rennes1.fr}

 \medskip

  \noindent\textsc{UMR 6625 CNRS Institut de Recherche Math\'ematique de
    Rennes (IRMAR) \\ Universit\'e de Rennes 1, Campus de Beaulieu, F-35042
    Rennes \textsc{Cedex}, France.}

\bigskip

\noindent Pierre-Andr\'e~\textsc{Zitt}, 
e-mail: \texttt{Pierre-Andre.Zitt(AT)u-bourgogne.fr}

\medskip 

\noindent\textsc{UMR 5584 CNRS Institut de Math\'ematiques de Bourgogne,\\
Universit\'e de Bourgogne, UFR Sciences et Techniques,\\
9 avenue Alain Savary -- BP 47870, 
21078 Dijon Cedex, France
}
}
\end{document}